\let\originalleft\left
\let\originalright\right
\renewcommand{\left}{\mathopen{}\mathclose\bgroup\originalleft}
\renewcommand{\right}{\aftergroup\egroup\originalright}
\newcommand*\filledCircled[1]{\tikz[baseline=(char.base)]{\node[fill=white,shape=circle,draw,inner sep=1.2pt](char){#1};}}
\begin{document}

\def\cA{\mathcal{A}}
\def\cB{\mathcal{B}}
\def\cC{\mathcal{C}}
\def\cD{\mathcal{D}}
\def\cX{\mathcal{X}}
\def\cY{\mathcal{Y}}
\def\ee{\varepsilon}

\newtheorem{theorem}{Theorem}[section]
\newtheorem{corollary}[theorem]{Corollary}
\newtheorem{lemma}[theorem]{Lemma}
\newtheorem{proposition}[theorem]{Proposition}
\newtheorem{conjecture}[theorem]{Conjecture}

\theoremstyle{definition}
\newtheorem{definition}{Definition}[section]
\newtheorem{example}[definition]{Example}

\theoremstyle{remark}
\newtheorem{remark}{Remark}[section]


\title{
Robust Chaos and the Continuity of Attractors.
}
\author{
P.A.~Glendinning$^{\dagger}$ and D.J.W.~Simpson$^{\ddagger}$\\
\small $^{\dagger}$School of Mathematics, University of Manchester, Oxford Road, Manchester, M13 9PL, UK.\\
\small $^{\ddagger}$School of Fundamental Sciences, Massey University, Palmerston North, New Zealand
}
\maketitle


\begin{abstract}

As the parameters of a map are varied an attractor may vary continuously in the Hausdorff metric.
The purpose of this paper is to explore the continuation of chaotic attractors.
We argue that this is not a helpful concept for smooth unimodal maps for which periodic windows fill parameter space densely,
but that for piecewise-smooth maps it provides a way to
delineate structure within parameter regions of robust chaos
and form a stronger notion of robustness.
We obtain conditions for the continuity of an attractor
and demonstrate the results with coupled skew tent maps, the Lozi map, and the border-collision normal form.


\end{abstract}

\section{Introduction}
\label{sec:intro}

Let $f_\mu$ be a family of maps on $\mathbb{R}^p$
that vary continuously with respect to a parameter $\mu \in M \subset \mathbb{R}^q$,
where $M$ is compact with non-empty interior.
We say that $f_\mu$ exhibits {\em robust chaos} in $M$ if $f_\mu$ has a chaotic attractor for each $\mu \in M$
and there exist $\mu_1, \mu_2 \in M$ such that $f_{\mu_1}$ is not topologically conjugate to $f_{\mu_2}$ \cite{BaYo98,Gl17}.
This last stipulation prohibits the trivial case that $f_\mu$ undergoes no topological change as $\mu$ is varied.
While robust chaos does not occur for generic smooth maps of the interval \cite{Va10},
it appears to be typical for maps that are piecewise-smooth \cite{GlSi19}.

In applications that utilise chaos, such as mixing \cite{Ot90},
spacecraft trajectories \cite{BoMe95},
and encryption \cite{KoLi11},
robust chaos is often a desired property.
It seems reasonable that chaotic attractors at nearby parameter values should be in some way related
because the map varies continuously even if the details of the dynamics can change.
The aim of this paper is to give mathematical meaning to this sense of sameness
by using continuity in the Hausdorff metric.
We show how this adds structure to parameter regimes of robust chaos,
and in fact a layered structure when multiple attractors coexist.

The continuity of attractors in the Hausdorff metric has been useful in a number of problems.
Stuart and Humphries \cite{StHu96} use it with the semi-distance (see \S\ref{sec:definitions})
to assess the numerical approximation of dynamical systems via the geometry of attractors rather than their dynamics.
This is a natural extension to continuation techniques for periodic orbits.
There are also fairly general results for the continuity of global attractors of semi-flows \cite{DeKl04}.
In particular Hoang {\em et.~al.}~\cite{HoOl15} develop a concise and effective characterisation of the continuity of global attractors
and we follow their approach in \S\ref{sec:UCC}.

Outside of \S\ref{sec:UCC} we investigate the continuity of attractors through a series of examples.
Our motivation is to develop ideas which can be used easily.
This, together with the fact that our definition of attractors (see \S\ref{sec:definitions}) is local,
means we need a slightly more complicated continuity argument than that of \cite{HoOl15}, although the underlying principles are the same.

\section{Definitions}
\label{sec:definitions}

Let $d$ be a metric on $\mathbb{R}^p$.
The (asymmetric) {\em semi-distance} between sets $X, Y \subseteq \mathbb{R}^p$ is defined as
\begin{equation}
\label{eq:asymdist}
d_a(X,Y) = \sup_{x \in X} \inf_{y \in Y} d(x,y),
\end{equation}
see Fig.~\ref{fig:schemHausdorffDistance} for a visualisation.
The Hausdorff distance is the following symmetric version of this semi-distance:
\begin{equation}
\label{eq:Hm}
d_H(X,Y) = \max \left[ d_a(X,Y), d_a(Y,X) \right].
\end{equation}
We also write
$$
B_r(X) = \left\{ y \in \mathbb{R}^p \,\middle|\, d_a(y,X) \le r \right\},
$$
to denote the closed ball of radius $r > 0$ around a set $X \subseteq \mathbb{R}^p$.

\begin{figure}[b!]
\begin{center}
\setlength{\unitlength}{1cm}
\begin{picture}(5.6,4.2)
\put(0,0){\includegraphics[height=4.2cm]{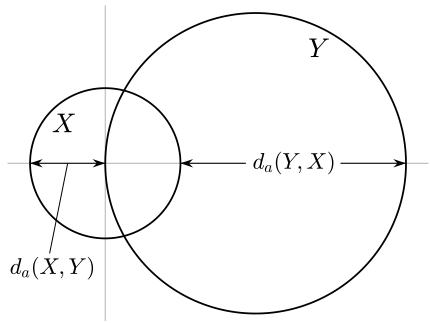}}
\end{picture}
\caption{
The semi-distance \eqref{eq:asymdist} for two closed discs in $\mathbb{R}^2$.
\label{fig:schemHausdorffDistance}
}
\end{center}
\end{figure}

\begin{definition}
\label{def:attr}
Let $f$ be a continuous map on $\mathbb{R}^p$.
A compact set $\cA \subset \mathbb{R}^p$ is an {\em attractor} of $f$ if
\begin{enumerate}
\setlength{\itemsep}{0pt}
\item
$f(\cA) = \cA$,
\item
$\cA$ contains a dense orbit, and
\item
there exists $r > 0$ such that
$d_a \left( f^n(x), \cA \right) \to 0$ 
as $n \to \infty$ for all $x \in B_r(\cA)$.
\end{enumerate}
If in addition Lyapunov exponents of typical points are positive then we say $\cA$ is a {\em chaotic} attractor.
\end{definition}

Now suppose a family of maps $f_\mu$ has an attractor $\cA_\mu$ for all $\mu \in M$.
To say that $\cA_\mu$ is continuous in the Hausdorff metric at some $\mu \in M$ means the following:
for all $\ee > 0$ there exists $\delta > 0$ such that
$d_H \left( \cA_\mu, \cA_\nu \right) < \ee$ whenever $\nu \in M$ and $|\mu - \nu| < \delta$.


\section{Tent maps}
\label{sec:tent}

Here we consider the tent map
\begin{equation}
\label{eq:tent}
T_s(x) = \begin{cases}
s x, & 0 \le x \le \frac{1}{2}, \\
s(1-x), & \frac{1}{2} \le x \le 1.
\end{cases}
\end{equation}
If $1 < s \le 2$ the tent map has a unique attractor on $[0,1]$,
see Fig.~\ref{fig:bifDiagSymmetricTentMap}.
If $\sqrt{2} \le s \le 2$, the attractor is the interval
$I_0(s) = \left[ s \left( 1 - \frac{s}{2} \right), \frac{s}{2} \right]$.
Otherwise it is the union of $2^n$ disjoint closed intervals where
$\sqrt{2} \le s^{2^{n-1}} < 2$, see \cite{Gl94,Va80}.
Lemma \ref{lemma:tent} below shows that, despite having different numbers of connected components,
the attractor is continuous in the Hausdorff metric.
In a similar way stable periodic solutions in period-doubling cascades are continuous
because, despite a change in the period at period-doubling bifurcations,
the attractor does not experience a jump in phase space.
In this and the next section we use $d(x,y) = |x-y|$.

\begin{figure}[b!]
\begin{center}
\setlength{\unitlength}{1cm}
\begin{picture}(12,4.2)
\put(0,0){\includegraphics[height=4.2cm]{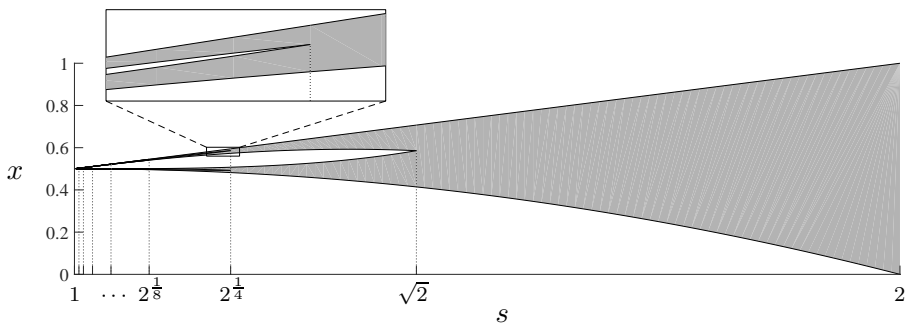}}
\put(6.5,0){\small $s$}
\put(0,1.9){\small $x$}
\put(.83,.27){\scriptsize $1$}
\put(1.25,.27){\scriptsize $\cdots$}
\put(1.75,.27){\scriptsize $2^{\frac{1}{8}}$}
\put(2.83,.27){\scriptsize $2^{\frac{1}{4}}$}
\put(5.2,.27){\scriptsize $\sqrt{2}$}
\put(11.8,.27){\scriptsize $2$}
\end{picture}
\caption{
The attractor of the tent map \eqref{eq:tent} for different values of the parameter $s$.
As the value of $s$ is decreased the number of intervals that comprise the attractor doubles at
$s = 2^\frac{1}{2^n}$ 
for each $n \ge 1$.
These band-splittings do not produce a jump in the Hausdorff distance, Lemma \ref{lemma:tent}.
\label{fig:bifDiagSymmetricTentMap}
}
\end{center}
\end{figure}

\begin{lemma}
\label{lemma:tent}
The attractor of the tent map \eqref{eq:tent} is continuous for $1 < s \le 2$.
\end{lemma}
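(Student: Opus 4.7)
The plan is to leverage the explicit combinatorial description of the attractor. Define $c_k(s) = T_s^k(\tfrac{1}{2})$; each $c_k$ is a continuous function of $s$ because $T_s$ depends continuously on $(s,x)$. By the classical results cited in the paper (\cite{Gl94,Va80}), for $s$ in the open interval $(s_n, s_{n-1})$ between consecutive band-splitting values $s_n = 2^{1/2^n}$ (with $s_0 := 2$), the attractor $\cA_s$ is a disjoint union of closed intervals whose endpoints form a specific finite subset of $\{c_k(s) : k \ge 1\}$, and the identification of endpoints with particular iterates $c_k$ is locally constant in $s$. In the top band $[\sqrt{2},2]$ one has $\cA_s = [c_2(s),c_1(s)]$.

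For $s_0$ in the open interior of a band, I would argue that Hausdorff continuity at $s_0$ is immediate: the attractor is a fixed combinatorial union of finitely many intervals whose endpoints are continuous functions of $s$, with positive length and positive separation bounded below on a neighborhood of $s_0$, so a small perturbation in $s$ moves each endpoint by a small amount and changes the Hausdorff distance by a similarly small amount.

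At a band-splitting value $s_0 = s_n$ I would argue from each side separately. From the right, $s$ and $s_0$ both lie in $[s_n, s_{n-1})$, where the combinatorics is stable and the endpoints vary continuously, so the previous argument applies. From the left, each component of $\cA_{s_0}$ is the merger of a pair of components of $\cA_s$: writing such a pair as $[a(s),b(s)] \cup [c(s),d(s)]$ with $b(s) \le c(s)$ and $a,b,c,d$ continuous in $s$, the splitting is characterised by $b(s_n) = c(s_n)$, while $a(s_n)$ and $d(s_n)$ are the endpoints of the merged component in $\cA_{s_0}$. An elementary bound then gives
\begin{equation*}
d_H \bigl([a(s),b(s)] \cup [c(s),d(s)], [a(s_n),d(s_n)] \bigr) \le \max \bigl( |a(s)-a(s_n)|, |d(s)-d(s_n)|, c(s)-b(s) \bigr),
\end{equation*}
and all three quantities tend to $0$ as $s \to s_n^-$.

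The main obstacle will be invoking the correct structural description of $\cA_s$: that its endpoints are specific iterates $c_k(s)$ with a combinatorial labelling that is locally constant away from band-splittings and transitions in exactly the pairwise-merger pattern at each $s_n$. Once this catalog is in hand, the proof reduces to continuity of finitely many compositions of $T_s$ applied to $\tfrac{1}{2}$, together with the gap-closure estimate above. Infinitely many band-splittings accumulate at $s = 1$, but the lemma only asserts continuity for $s > 1$, so only finitely many splittings lie in any neighborhood of a fixed $s_0 > 1$ and the local analysis suffices.
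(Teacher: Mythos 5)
Your proposal is correct and follows essentially the same route as the paper: continuity inside each band comes from the continuity of the endpoints $T_s^k\left(\tfrac{1}{2}\right)$ as functions of $s$, and continuity at each band-splitting value $2^{1/2^n}$ comes from a two-sided argument whose only non-trivial ingredient is that the gaps between merging bands close as $s$ approaches the splitting value from below. The one difference worth noting is that the paper verifies gap closure by an explicit computation only at $s=\sqrt{2}$, where $T_s^3\left(\tfrac{1}{2}\right)-T_s^4\left(\tfrac{1}{2}\right)=s(s-1)\left(1-\tfrac{s^2}{2}\right)$, and then handles all deeper splittings by the renormalization $T_s^2$ restricted to $I_1(s)$ or $I_2(s)$ being a rescaled copy of $T_{s^2}$ on $I_0(s^2)$, whereas you invoke the full combinatorial catalog of band endpoints from \cite{Gl94,Va80} directly; both versions defer the same underlying structural facts to the same references.
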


\begin{proof}
If $\sqrt{2} < s \le 2$ the attractor is
$I_0 = \left[ T_s^2 \left( \frac{1}{2} \right), T_s \left( \frac{1}{2} \right) \right]$
which is continuous since its endpoints vary continuously.

Next we verify continuity at $s = \sqrt{2}$.
As $s \to \sqrt{2}$ from above the attractor is $I_0(s)$
and simply converges to $I_0 \left( \sqrt{2} \right)$.
As $s \to \sqrt{2}$ from below the attractor is the disjoint union $I_1(s) \cup I_2(s)$ where
\begin{align*}
I_1(s) &= \left[ T_s^2 \left( \tfrac{1}{2} \right), T_s^4 \left( \tfrac{1}{2} \right) \right], &
I_2(s) &= \left[ T_s^3 \left( \tfrac{1}{2} \right), T_s \left( \tfrac{1}{2} \right) \right],
\end{align*}
see Fig.~\ref{fig:twoBandCobwebSymmetricTentMap}.
The lower endpoint of $I_1(s)$ and the upper endpoint of $I_2(s)$ converge to the endpoints of
$I_0 \left( \sqrt{2} \right)$,
so it remains to show that the size of the gap between $I_1(s)$ and $I_2(s)$
converges to zero as $s \to \sqrt{2}$.
Indeed a direct calculation produces
$$
T_s^3 \left( \tfrac{1}{2} \right) - T_s^4 \left( \tfrac{1}{2} \right) =
s (s - 1) \left( 1 - \tfrac{s^2}{2} \right),
$$
which vanishes at $s = \sqrt{2}$,
This shows that $d_H \left( I_0 \left( \sqrt{2} \right), I_1(s) \cup I_2(s) \right) \to 0$
as $s \to \sqrt{2}$ from below,
so the attractor of \eqref{eq:tent} is continuous at $s = \sqrt{2}$.

If $s < \sqrt{2}$ then $T_s^2$ restricted to $I_1(s)$ or $I_2(s)$
is a linear rescaling of $T_{s^2}$ restricted to $I_0 \left( s^2 \right)$
and so the proof can be completed by considering higher iterates
in an inductive fashion \cite{Gl94,Va80}.
\end{proof}

\begin{figure}[h!]
\begin{center}
\setlength{\unitlength}{1cm}
\begin{picture}(6,6)
\put(0,0){\includegraphics[height=6cm]{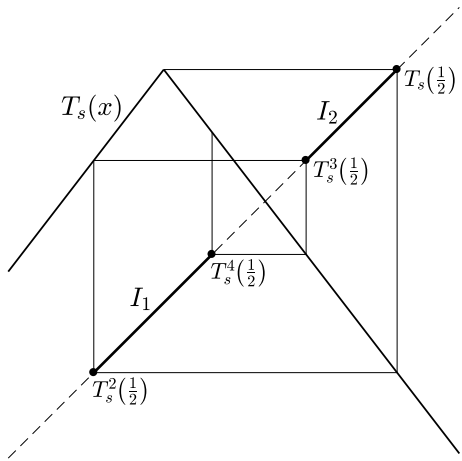}}
\end{picture}
\caption{
A cobweb diagram showing 
the two-band attractor of the tent map \eqref{eq:tent}
for $2^{\frac{1}{4}} < s < \sqrt{2}$.
\label{fig:twoBandCobwebSymmetricTentMap}
}
\end{center}
\end{figure}

\section{Quadratic maps}
\label{sec:quad}

The aim of this section is to argue that the continuation of chaotic attractors is not helpful for smooth maps
(at least in one dimension).
To do this we rely on a number of standard results for quadratic families which can be found, for example, in \cite{Va10,DeVa93}.
Smooth families of unimodal maps $f_\mu$ with a single maximum occurring at $c$ (the critical point)
can be described in terms of symbolic dynamics.
Every orbit defines a sequence of $C$'s, $L$'s and $R$'s by seeing if the
$n^{\rm th}$ point of the orbit equals $c$, lies to the left of $c$, or lies to the right of $c$, respectively.
There is a natural order on these sequences:
if $W$ is a finite sequence of symbols (a word) then
$WL < WC < WR$ if $W$ has an even number of $R$'s
and the inequality is reversed if $W$ has an odd number of $R$'s
(this is connected with the fact that $f^n$ is decreasing if there are an odd number of $R$'s).
The {\em kneading invariant} of $f_\mu$ is the symbol sequence $K_\mu$ associated with $f_\mu(c)$.
There are consistency conditions on the possible sequences that can occur:
if the critical point is not periodic then this condition is simply that $K_\mu$ is the maximal element of its shifts:
\begin{equation}\label{eq:consist}
\sigma^k \left( K_\mu \right) \le K_\mu \,, \quad \text{for all~} k \ge 1,
\end{equation}
where $\sigma$ is the shift map.
Families which depend smoothly ($C^1$) on the parameter are full if all possible consistent kneading invariants
between those of the end-points do actually occur in the family.
A family is monotonic if the kneading invariant is monotonic in the parameter
(this means in particular that certain behaviour is not repeated).
The standard quadratic maps are full and monotonic.
Further conditions such as convexity or negative Schwarzian derivative imply that there is a unique attractor at every parameter value.
Recall that $f_\mu$ is in a period-$n$ window if there exists $n>0$ and an interval $J$ such that $f_\mu^n$ restricted to $J$ is a unimodal map.

\begin{lemma}
\label{lemma:saddlenode}
Suppose that $f_\mu$ is a smooth unimodal map (at least $C^2$ in phase space and $C^1$ in the parameter) and has a unique attractor at each parameter value $\mu$ in some closed interval $I$. If there is a parameter $\mu_0\in \textrm{int}(I)$ with a non-degenerate saddle-node bifurcation of period $k>1$, then the attractor is not continuous in the Hausdorff metric at $\mu_0$.
\end{lemma}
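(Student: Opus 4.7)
The plan is to use the saddle-node orbit $\mathcal{O}_0=\{p_0,\dots,p_{k-1}\}$ of $f_{\mu_0}$ as a fixed reference set and to compare the attractors $\cA_\mu$ against $\mathcal{O}_0$ on the two sides of $\mu_0$. Non-degeneracy means each $p_i$ is a fixed point of $f_{\mu_0}^k$ with multiplier $1$, with nonvanishing quadratic coefficient in the phase variable and nonvanishing transverse unfolding in the parameter. After possibly reversing orientation of $\mu$, take $\mu>\mu_0$ to be the side on which two period-$k$ orbits of $f_\mu$ (a stable cycle $\mathcal{O}_\mu^+$ and an unstable cycle $\mathcal{O}_\mu^-$) bifurcate from $\mathcal{O}_0$, and $\mu<\mu_0$ the side on which no period-$k$ orbit lies near $\mathcal{O}_0$.

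For $\mu>\mu_0$ close to $\mu_0$, the cycle $\mathcal{O}_\mu^+$ has $k$-periodic multiplier strictly less than $1$ in modulus, so it satisfies every item of Definition \ref{def:attr} and is therefore an attractor. By the uniqueness hypothesis, $\cA_\mu=\mathcal{O}_\mu^+$, a finite set whose points lie within distance $O\bigl(\sqrt{\mu-\mu_0}\bigr)$ of $\mathcal{O}_0$; hence
\begin{equation*}
d_H(\cA_\mu,\mathcal{O}_0)\longrightarrow 0 \quad\text{as } \mu\to\mu_0^+.
\end{equation*}

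For $\mu<\mu_0$ close to $\mu_0$, I would argue that $\cA_\mu$ sits at a uniformly positive Hausdorff distance from $\mathcal{O}_0$. Using the saddle-node normal form, pick $\rho_0>0$ so small that the closed balls $B_{\rho_0}(p_i)$ are pairwise disjoint, that $f_\mu$ maps $B_{\rho_0}(p_i)$ into the interior of $B_{\rho_0}(p_{i+1\bmod k})$, and that on each $B_{\rho_0}(p_i)$ the function $f_\mu^k(x)-x$ retains a single nonzero sign for every $\mu\in(\mu^*,\mu_0)$, where $\mu^*<\mu_0$ is chosen sufficiently close to $\mu_0$. Any $f_\mu^k$-orbit entering $B_{\rho_0}(p_i)$ is then strictly monotone in the appropriate direction and exits the ball in finite time. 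A nonempty $f_\mu$-invariant $X\subseteq B_{\rho_0}(\mathcal{O}_0)$ would necessarily meet each $B_{\rho_0}(p_i)$ (by cycling of $f_\mu$) and the intersection $X\cap B_{\rho_0}(p_0)$ would be $f_\mu^k$-invariant, contradicting the escape property. Hence $\cA_\mu$ contains a point outside $B_{\rho_0}(\mathcal{O}_0)$, which gives
\begin{equation*}
d_H(\cA_\mu,\mathcal{O}_0)\ge d_a(\cA_\mu,\mathcal{O}_0)\ge \rho_0 \quad\text{for all } \mu\in(\mu^*,\mu_0).
\end{equation*}

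Combining the two estimates with the triangle inequality gives $d_H(\cA_\mu,\cA_{\mu'})\ge \rho_0/2$ whenever $\mu<\mu_0<\mu'$ are close enough to $\mu_0$, so Hausdorff continuity of $\cA_\mu$ at $\mu_0$ is impossible. The main obstacle is the escape step: the radius $\rho_0$ and the sign-and-size control of $f_\mu^k(x)-x$ on $B_{\rho_0}(\mathcal{O}_0)$ must be chosen uniformly in $\mu$ over a one-sided parameter interval, even though the escape time through $B_{\rho_0}(p_i)$ diverges as $\mu\to\mu_0^-$ (this is exactly the intermittency phenomenon). Fortunately only \emph{finiteness} of the escape time is needed, and the divergence itself is harmless; this is a routine application of the non-degenerate saddle-node normal form on a compact parameter range, but it is the only step that requires genuine care.
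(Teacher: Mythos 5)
Your argument is correct in substance, but it is a genuinely different proof from the one in the paper. The paper's proof is one-sided and symbolic: it identifies the attractor at $\mu_0$ itself as the non-hyperbolic period-$k$ cycle $W_k$ together with its one-sided immediate basins $B_i$, and then uses kneading theory to produce Misiurewicz parameters $\mu_n \to \mu_0$ on the intermittent side at which the unique attractor is the whole interval $[f_{\mu_n}^2(c), f_{\mu_n}(c)]$; since that interval contains some $B_i$, the semi-distance from it to $W_k$ is bounded below by half the length of $B_i$. You instead compare the two sides of $\mu_0$ against the fixed reference set $\mathcal{O}_0$: on one side uniqueness forces $\cA_\mu$ to be the newborn stable cycle, which converges to $\mathcal{O}_0$, while on the other side an escape (intermittency) argument shows no invariant set, hence no attractor, can be confined to a small neighbourhood of $\mathcal{O}_0$. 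Your route is more elementary and arguably proves the lemma under exactly its stated hypotheses, since it needs neither fullness nor monotonicity of the kneading invariant nor the realisation of prescribed Misiurewicz kneading sequences (all of which the paper's proof quietly relies on); it also sidesteps the delicate question of whether the non-hyperbolic cycle at $\mu_0$ genuinely satisfies part (iii) of Definition \ref{def:attr} given that its basins are one-sided. What the paper's argument buys in exchange is a concrete description of what the attractor jumps to (a full interval) and of the mechanism (the basin intervals being swallowed), which is the picture the subsequent corollary exploits.

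One technical repair is needed in your escape step: you cannot in general arrange $f_\mu\bigl(B_{\rho_0}(p_i)\bigr) \subseteq \operatorname{int} B_{\rho_0}(p_{i+1})$ with a common radius, because although the multiplier of the cycle is $1$, the individual derivatives $|f_{\mu_0}'(p_i)|$ may exceed $1$, so $f_\mu$ can expand some of the balls. The cycling argument only requires the weaker fact that, for $\rho_0$ small, $f_\mu\bigl(B_{\rho_0}(p_i)\bigr) \cap B_{\rho_0}(\mathcal{O}_0) \subseteq B_{\rho_0}(p_{i+1})$, which follows from continuity and the pairwise disjointness of the balls (alternatively, use balls of unequal radii $\rho_i$). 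With that adjustment, an invariant set contained in $B_{\rho_0}(\mathcal{O}_0)$ still yields a nonempty compact $f_\mu^k$-invariant subset of $B_{\rho_0}(p_0)$, and taking its maximum contradicts the constant nonzero sign of $f_\mu^k(x)-x$ there, exactly as you intend.
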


\begin{proof}
We may assume without loss of generality that $f_{\mu_0}$ is not in a period $r$ window, $r<k$ (otherwise consider $f^r$).
In particular we can assume that the parameter lies in the single band region (the equivalent of $s>\sqrt{2}$ in the tent map)
with kneading invariant greater than $RLR^\infty$.
The kneading invariant at $\mu_0$ is $K(\mu_0)=(WC)^\infty$ with $W$ a sequence of $L$'s and $R$'s starting $RL$.
Choose the sign of $\mu$ so that the periodic orbit created by the saddle-node bifurcation exists if $\mu >\mu_0$. If $\mu<\mu_0$ with $|\mu -\mu_0|$ small, then the kneading invariant starts $(WE)^n\dots$,
with $n\to\infty$ as $\mu\to \mu_0$ from below and
$$
E =
\begin{cases}
L, & $W$ ~\text{has an even number of}~ R \text{'s}, \\
R, & $W$ ~\text{has an odd number of}~ R \text{'s}.
\end{cases}
$$
It is now an elementary exercise to show that the sequences $(WE)^n WLR^\infty$ if $E=L$ or $(WE)^nWR^\infty$
if $E=R$ satisfy the consistency conditions (\ref{eq:consist}), and since the points $c$ and its images define a Markov partition
(they are Misiurewicz points) and are not in a period $r$ window the attractor is the interval $I_{\mu_n}=[f^2_{\mu_n} (c),f_{\mu_n}(c)]$.
Thus arbitrarily close to the bifurcation value $\mu_0$ the attractor is an interval.

If $\mu =\mu_0$ then there is a non-hyperbolic periodic orbit $W_k$ of period $k$ which is the attractor for
$f$ and $k$ non-trivial close intervals $B_i$, $i=1, \dots ,k$, which are the immediate basins of attraction of the periodic orbit
of period $k$ (one end point of each of these intervals is a point of period $k$; these are one-sided basins of attraction).

We have already seen that in any small neighbourhood of $\mu_0$ there exists $\mu$
such that the attractor of $f_\mu$ is an interval $I_{\mu_n}$,
and there exists $i$ such that $B_i\subset I_{\mu_n}$ (as the period $k\ge 3$).
Hence if $|B_i|=2\ee$, $d_a(I_{\mu_n} ,W_k)>\ee$ since the closest that $W_k$ can be to the part of the attractor inside $B_i$ is $\ee$.
Hence the attractor is not continuous at $\mu_0$.
\end{proof}

\begin{corollary}
The only non-trivial intervals on which the attractor of the logistic map is continuous
in the Hausdorff metric are the period-doubling cascades of stable periodic orbits.
\end{corollary}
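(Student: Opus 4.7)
The plan is to combine Lemma~\ref{lemma:saddlenode} with the standard structure of hyperbolic components for the quadratic family to show that every non-trivial interval of Hausdorff continuity lies inside some period-doubling cascade. Continuity of the attractor throughout such a cascade is already asserted in the discussion following Lemma~\ref{lemma:tent}, so only the reverse direction needs proof.

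Let $J \subset [0,4]$ be a non-trivial interval of continuity and set $U = \textrm{int}(J)$. By Lemma~\ref{lemma:saddlenode}, $U$ contains no non-degenerate saddle-node bifurcation of period $k > 1$; since every saddle-node of the logistic family is non-degenerate and of period $\ge 2$, in fact $U$ is free of saddle-node bifurcations altogether. Next I would invoke the standard fact (see \cite{Va10,DeVa93}) that the set $H \subset [0,4]$ of parameters with an attracting periodic orbit is open and dense, and that its connected components are precisely the period-doubling cascades $(\mu_{SN}, \mu_\infty^{(k)})$, bounded on the left by an opening saddle-node and on the right by a Feigenbaum accumulation point. Because $U$ is connected, it suffices to prove $U \subset H$; this places $J$ in the closure of a single cascade.

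Suppose, for contradiction, that some $\mu^* \in U$ lies outside $H$. If $\mu^*$ is a Feigenbaum accumulation point, then parameters just above $\mu^*$ occupy the chaotic tail of the associated periodic window, in which sub-windows (each opening at a saddle-node) accumulate on $\mu^*$ from above. If instead $\mu^*$ is a non-Feigenbaum chaotic parameter, the density of $H$-components in the chaotic parameter set forces saddle-nodes to accumulate on $\mu^*$ from both sides. Either way, some saddle-node bifurcation of period $>1$ lies in the open set $U$, contradicting the first step, so $U \subset H$.

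The step I expect to be the main obstacle is justifying the accumulation of saddle-nodes onto every point of $H^c$. This relies on the fractal arrangement of periodic windows in the quadratic family, and I would handle it by citing \cite{Va10,DeVa93} rather than reproving the density statement from scratch.
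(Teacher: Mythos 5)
Your proposal is correct and takes essentially the same route as the paper: the paper's entire proof is the one-line observation that between any two topologically distinct chaotic attractors there exist parameter values with saddle-node bifurcations of periodic orbits, which combined with Lemma~\ref{lemma:saddlenode} rules out continuity on any non-trivial interval not contained in a period-doubling cascade. Your write-up merely makes explicit the standard facts about density of hyperbolicity and the structure of hyperbolic components that the paper leaves implicit.
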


\begin{proof}
Between any two topologically distinct chaotic attractors
there exist parameter values with saddle-node bifurcations of periodic orbits.
\end{proof}

\section{Uniform continuity and continuation}
\label{sec:UCC}

In this section we derive general results for the continuity of attractors.
Our approach follows Hoang {\em et.~al.}~\cite{HoOl15}
with some technical additions required to accommodate local attractors
that will be useful when we come to the Lozi map in \S\ref{sec:Lozi}. 
 
Let $f_\mu$ be a family of maps on $\mathbb{R}^p$
where $\mu \in M$ and $M \subset \mathbb{R}^q$ is compact.
Assume $f_\mu$ varies continuously in phase space and in $\mu$.
Assume that for each $\mu \in M$, the map $f_\mu$ has an attractor $\cA_\mu$
and let $r_\mu > 0$ be a suitable value for part (iii) of Definition \ref{def:attr}.
Two further assumptions are needed.
\begin{itemize}
\item[(A1)]
There exists compact $\Omega \subset \mathbb{R}^p$ such that $B_{r_\mu}(\cA_\mu) \subseteq \Omega$ for all $\mu \in M$.
\item[(A2)]
For all $\mu \in M$ there exists compact $N_\mu \subseteq \Omega$,
continuous (with respect to $\mu$) in the Hausdorff metric,
such that $f(N_\mu) \subseteq N_\mu$ and
$\cA_\mu = {\rm cl} \left( \cap_{n=0}^\infty f^n_\mu (N_\mu) \right)$.
\end{itemize}
 
These are the natural generalizations of (L2) and (L3) of \cite{HoOl15}.
The following lemma shows that at each stage of the construction of the attractor by iterates of $N_\mu$,
the sets remain close (this is equivalent to Lemma 3.1 of \cite{HoOl15}).

\begin{lemma}
\label{lemma:close}
Suppose $f_\mu$ is continuous with an attractor $\cA_\mu$ and (A1) and (A2) hold.
For each $n \ge 0$, $f_\mu^n(N_\mu)$ is continuous in the Hausdorff metric in $M$.
\end{lemma}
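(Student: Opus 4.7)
The plan is to prove the statement by induction on $n$, with $n=0$ being immediate from (A2). The heart of the argument is the following reduction: if $\mu \mapsto K_\mu$ is a continuous family (in the Hausdorff metric) of compact subsets of $\Omega$, then $\mu \mapsto f_\mu(K_\mu)$ is also continuous in the Hausdorff metric. Given this reduction, the inductive step is immediate, because $f_\mu^n(N_\mu) \subseteq N_\mu \subseteq \Omega$ (by (A2), $N_\mu$ is forward invariant and contained in $\Omega$), so we may apply the reduction with $K_\mu = f_\mu^n(N_\mu)$ to obtain continuity of $f_\mu^{n+1}(N_\mu)$.

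To prove the reduction, the main tool is uniform continuity of the evaluation map $(x,\mu) \mapsto f_\mu(x)$ on the compact set $\Omega \times M$. Given any $\ee > 0$, uniform continuity yields $\delta_1 > 0$ such that $|f_\mu(x) - f_\nu(y)| < \ee$ whenever $x, y \in \Omega$, $\mu, \nu \in M$, $|x-y| < \delta_1$, and $|\mu - \nu| < \delta_1$. By continuity of $\mu \mapsto K_\mu$ in the Hausdorff metric, there exists $\delta_2 > 0$ with $d_H(K_\mu, K_\nu) < \delta_1$ whenever $|\mu - \nu| < \delta_2$. Taking $\delta = \min(\delta_1, \delta_2)$, for any $x \in K_\mu$ we can pick $y \in K_\nu$ with $|x-y| < \delta_1$, and then $f_\nu(y) \in f_\nu(K_\nu)$ is within $\ee$ of $f_\mu(x)$. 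Taking the supremum over $x \in K_\mu$ gives $d_a(f_\mu(K_\mu), f_\nu(K_\nu)) \le \ee$. Swapping the roles of $\mu$ and $\nu$ gives the symmetric inequality, so $d_H(f_\mu(K_\mu), f_\nu(K_\nu)) \le \ee$, as required.

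The one point that needs care, and which I would flag as the principal technical obstacle, is verifying that the iterates stay inside a fixed compact set on which uniform continuity is available. This is the reason assumption (A2) requires $N_\mu \subseteq \Omega$ together with $f_\mu(N_\mu) \subseteq N_\mu$: together they guarantee that $f_\mu^n(N_\mu) \subseteq \Omega$ for every $n$ and every $\mu \in M$, so each stage of the induction operates on compact sets that all sit inside the single compact set $\Omega$ on which $(x,\mu) \mapsto f_\mu(x)$ is uniformly continuous. Without such a uniform containment one would not get a single $\delta_1$ that works simultaneously for all points arising as iterates; this is precisely the local-attractor subtlety that motivates introducing $\Omega$ in (A1)–(A2) and distinguishes this argument from the global-attractor setting of Hoang \emph{et.~al.}
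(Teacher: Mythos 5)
Your proof is correct and is essentially the paper's argument: both rest on uniform continuity of the evaluation map on the compact set $\Omega \times M$ (via Heine--Cantor) combined with the Hausdorff-continuity of $\mu \mapsto N_\mu$ from (A2). The only difference is organisational: you induct on $n$ and apply uniform continuity of $f_\mu$ once per step (which is where you genuinely need the forward invariance $f_\mu(N_\mu) \subseteq N_\mu$ to keep all iterates inside $\Omega$), whereas the paper applies uniform continuity directly to $(x,\mu) \mapsto f_\mu^n(x)$ on $\Omega \times M$ and therefore does not need the intermediate iterates to remain in $\Omega$ at all.
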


\begin{proof}
Choose any $n \ge 0$ and $\ee > 0$.
Since $f^n$ is continuous in $x$ and $\mu$
and $\Omega$ and $M$ are compact,
by the Heine-Cantor theorem $f^n$ is uniformly continuous in $x$ and $\mu$.
Thus there exist $\delta_\Omega, \delta_M > 0$ such that
for all $x, y \in \Omega$ with $d(x,y) < \delta_\Omega$
and all $\mu, \nu \in M$ with $|\mu - \nu| < \delta_M$
we have $d \left( f_\mu^n(x), f_\nu^n(y) \right) < \ee$.

Since $N_\mu$ is continuous on the compact set $M$
it is similarly uniformly continuous and so there exists $\delta_1 > 0$ such that
for all $\mu, \nu \in M$ with $|\mu - \nu| < \delta_1$
we have $d_H \left( N_\mu, N_\nu \right) < \delta_\Omega$.

Let $\delta = \min(\delta_1,\delta_M)$.
Choose any $\mu, \nu \in M$ with $|\mu - \nu| < \delta$.
Then
$$
d_a \left( f_\mu^n(N_\mu), f_\nu^n(N_\nu) \right) =
\sup_{x \in N_\mu} \inf_{y \in N_\nu} d \left( f_\mu^n(x), f_\nu^n(y) \right) < \ee,
$$
because for all $x \in N_\mu$ there exists $y \in N_\nu$ such that $d(x,y) < \delta_\Omega$.
We similarly have $d_a \left( f_\nu^n(N_\nu), f_\mu^n(N_\mu) \right) < \ee$,
thus $d_H \left( f_\mu^n(N_\mu), f_\nu^n(N_\nu) \right) < \ee$, as required.
\end{proof}

\begin{theorem}
\label{th:cts}
If the conditions of Lemma \ref{lemma:close} hold and $d_H \left( f_\mu^n(N_\mu), \cA_\mu \right) \to 0$
as $n \to \infty$ uniformly in $M$, then $\cA_\mu$ is continuous in the Hausdorff metric in $M$.
\end{theorem}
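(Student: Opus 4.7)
The plan is a standard three-epsilon argument, combining the uniform convergence hypothesis with the continuity of the finite iterates $f_\mu^n(N_\mu)$ supplied by Lemma~\ref{lemma:close}. Fix $\mu \in M$ and $\ee > 0$; the goal is to find $\delta > 0$ such that $d_H(\cA_\mu, \cA_\nu) < \ee$ whenever $\nu \in M$ with $|\mu - \nu| < \delta$.

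First I would invoke the uniform convergence hypothesis to pick an index $n_0$ so large that
$$
d_H \left( f_\eta^{n_0}(N_\eta), \cA_\eta \right) < \tfrac{\ee}{3}, \quad \text{for all } \eta \in M.
$$
In particular this bound holds simultaneously at $\mu$ and at any nearby $\nu$. Next I would apply Lemma~\ref{lemma:close} at this fixed $n_0$: the map $\eta \mapsto f_\eta^{n_0}(N_\eta)$ is continuous into the space of compact subsets of $\Omega$ equipped with the Hausdorff metric, so there exists $\delta > 0$ with
$$
d_H \left( f_\mu^{n_0}(N_\mu), f_\nu^{n_0}(N_\nu) \right) < \tfrac{\ee}{3}, \quad \text{whenever } |\mu - \nu| < \delta.
$$

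The final step is the triangle inequality for $d_H$, which holds on the compact subsets of $\Omega$:
$$
d_H(\cA_\mu, \cA_\nu) \le d_H \left( \cA_\mu, f_\mu^{n_0}(N_\mu) \right) + d_H \left( f_\mu^{n_0}(N_\mu), f_\nu^{n_0}(N_\nu) \right) + d_H \left( f_\nu^{n_0}(N_\nu), \cA_\nu \right) < \ee,
$$
which gives continuity of $\cA_\mu$ at the chosen $\mu$, and since $\mu \in M$ was arbitrary, continuity on all of $M$.

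The argument is essentially bookkeeping; there is no single hard step, but the place that requires care is making sure that the same $n_0$ works for both $\mu$ and $\nu$ in the first and third terms of the triangle inequality. This is precisely what the uniform convergence hypothesis delivers, and it is why the statement needs uniformity rather than just pointwise convergence of $f_\mu^n(N_\mu)$ to $\cA_\mu$; without it, the index $n_0$ chosen at $\mu$ might not control the error at $\nu$, and the three-epsilon split would break down.
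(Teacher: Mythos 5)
Your proof is correct and follows essentially the same three-epsilon argument as the paper: pick $n_0$ from uniform convergence, use Lemma \ref{lemma:close} at that fixed $n_0$, and apply the triangle inequality. The only cosmetic difference is that the paper additionally invokes compactness of $M$ to upgrade the continuity of $\mu \mapsto f_\mu^{n_0}(N_\mu)$ to uniform continuity, obtaining a single $\delta$ valid for all $\mu$, whereas you work pointwise at each $\mu$ --- either suffices for the stated conclusion.
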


\begin{proof}
Choose any $\ee > 0$.
There exists $n_0 \ge 0$ such that for all $\mu \in M$ and all $n \ge n_0$ we have
$d_H \left( f_\mu^n(N_\mu), \cA_\mu \right) < \frac{\ee}{3}$.
By Lemma \ref{lemma:close}, $f_\mu^n(N_\mu)$ is continuous in $M$,
but $M$ is compact so the continuity is uniform,
thus there exists $\delta > 0$ such that for all $\mu, \nu \in M$ with $|\mu - \nu| < \delta$
we have $d_H \left( f_\mu^n(N_\mu), f_\nu^n(N_\nu) \right) < \frac{\ee}{3}$.
Then for any $\mu, \nu \in M$ with $|\mu - \nu| < \delta$ we have
$$
d_H \left( \cA_\mu, \cA_\nu \right) \le
d_H \left( \cA_\mu, f_\mu^n(N_\mu) \right) +
d_H \left( f_\mu^n(N_\mu), f_\nu^n(N_\nu) \right) +
d_H \left( f_\nu^n(N_\nu), \cA_\nu \right) < \ee.
$$
\end{proof}

The most remarkable aspect of Hoang {\em et.~al}.~\cite{HoOl15}
is their proof that uniform convergence to the attractor with respect to the parameter
implies continuity of the attractor,
and, if the convergence is only pointwise then the continuity at least occurs on a residual set.
Recall, a {\em residual set} is the complement of a countable union of nowhere dense sets,
and every residual set is dense.
The uniform case is covered above by Theorem~\ref{th:cts},
so it remains for us to address pointwise convergence.
The following technical result will be needed, and indeed, contains all the hard work!

\begin{lemma}[Hoang {\em et.~al}.~\cite{HoOl15}]
\label{lemma:resid}
Let $\cX$ be a complete metric space, $\cY$ be a metric space,
and $g_n : \cX \to \cY$ be a family of continuous maps.
If the pointwise limit
$g(x) = \lim_{n \to \infty} g_n(x)$
exists for each $x \in \cX$, then $g$ is continuous on a residual subset of $\cX$.
\end{lemma}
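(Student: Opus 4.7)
The plan is to carry out the standard Baire-category argument showing that any Baire class 1 function (pointwise limit of continuous maps) is continuous on a residual set. The main tool is the \emph{oscillation} of $g$ at a point, defined by
$$\omega(g,x) \;=\; \inf_{U \ni x \text{ open}} \; \sup_{y,z \in U} d_\cY\bigl(g(y),g(z)\bigr).$$
A routine check shows that $g$ is continuous at $x$ precisely when $\omega(g,x)=0$, and that the sublevel sets $\{x : \omega(g,x) < \ee\}$ are open. Writing $D_\ee = \{x \in \cX : \omega(g,x) \ge \ee\}$, each $D_\ee$ is therefore closed, and the set of discontinuities of $g$ is the $F_\sigma$ set $\bigcup_{n \ge 1} D_{1/n}$. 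Hence the lemma reduces to showing that $D_\ee$ is nowhere dense for every $\ee > 0$, since then the set of continuity points contains the residual set $\bigcap_{n \ge 1} (\cX \setminus D_{1/n})$.

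The substantive step is to show each $D_\ee$ has empty interior. I would argue by contradiction. Suppose some open ball $B \subseteq \cX$ is contained in $D_\ee$, fix $\eta = \ee/5$, and define
$$F_n = \bigl\{ x \in B : d_\cY(g_m(x), g_k(x)) \le \eta \text{ for all } m,k \ge n \bigr\}.$$
Each $F_n$ is closed in $B$ because every $g_m$ is continuous, and pointwise convergence $g_n(x) \to g(x)$ guarantees $B = \bigcup_{n \ge 1} F_n$. Since $\cX$ is a complete metric space the open set $B$ is a Baire space, so some $F_{n_0}$ has nonempty interior $U \subseteq B$. Letting $k \to \infty$ in the defining inequality yields the key pointwise bound $d_\cY(g_{n_0}(x), g(x)) \le \eta$ for every $x \in U$.

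To close the argument, pick any $x_0 \in U$ and use continuity of the single map $g_{n_0}$ to find an open neighbourhood $V \subseteq U$ of $x_0$ on which $d_\cY(g_{n_0}(y), g_{n_0}(x_0)) < \eta$. A four-term triangle inequality, splitting $d_\cY(g(y),g(z))$ through $g_{n_0}(y)$, $g_{n_0}(x_0)$, and $g_{n_0}(z)$, then gives $d_\cY(g(y),g(z)) \le 4\eta < \ee$ for all $y,z \in V$, forcing $\omega(g,x_0) < \ee$ and contradicting $x_0 \in B \subseteq D_\ee$. The main obstacle, to my mind, is the bookkeeping of strict versus non-strict inequalities: the slack built into the choice $\eta = \ee/5$ is what allows the sum $4\eta$ to be strictly less than $\ee$, even though two of the four estimates in the triangle inequality are only non-strict.
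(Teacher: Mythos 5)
Your proof is correct. The paper does not prove this lemma at all --- it is quoted from Hoang \emph{et al.} \cite{HoOl15} precisely because, as the authors remark, it ``contains all the hard work'' --- and your argument is the classical Baire-category proof that a pointwise limit of continuous maps is continuous on a residual set: oscillation sets $D_\varepsilon$ are closed, the sets $F_n$ where the tail of the sequence is uniformly $\eta$-Cauchy cover any ball, and Baire's theorem plus a four-term triangle inequality (with the correct safety margin $\eta = \varepsilon/5 < \varepsilon/4$) shows each $D_\varepsilon$ has empty interior. This is essentially the same argument as in the cited reference, so there is nothing to fault.
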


In our case, $\cX$ is the parameter space $M$ and $\cY$ the space of compact subsets of $\mathbb{R}^p$ with the Hausdorff metric.

\begin{theorem}
\label{th:tech}
Suppose $f_\mu$ is continuous with an attractor $\cA_\mu$ and (A1) and (A2) hold.
Then $\cA_\mu$ is continuous in the Hausdorff metric on a residual subset of $M$.
\end{theorem}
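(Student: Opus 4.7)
The plan is to apply Lemma~\ref{lemma:resid} to the sequence of maps $g_n : M \to \cY$ given by $g_n(\mu) = f_\mu^n(N_\mu)$, where $\cY$ denotes the collection of nonempty compact subsets of $\Omega$ equipped with the Hausdorff metric. Since $M$ is compact it is complete, so the hypothesis on $\cX$ in Lemma~\ref{lemma:resid} is met with $\cX = M$. Each $g_n$ is continuous by Lemma~\ref{lemma:close}, so the only remaining ingredient is pointwise convergence $g_n(\mu) \to \cA_\mu$ in the Hausdorff metric for each fixed $\mu$.

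The pointwise convergence is the substantive step. Because $f_\mu(N_\mu) \subseteq N_\mu$ by (A2), iterating $f_\mu$ produces the nested chain $N_\mu \supseteq f_\mu(N_\mu) \supseteq f_\mu^2(N_\mu) \supseteq \cdots$, and since each term is closed the intersection is itself closed and equals $\cA_\mu$ (so the closure in (A2) is superfluous here). Nesting immediately yields $d_a(\cA_\mu, g_n(\mu)) = 0$ for every $n$, so it suffices to show that $d_a(g_n(\mu), \cA_\mu) \to 0$. If this failed there would exist $\ee > 0$ and points $x_{n_k} \in g_{n_k}(\mu)$ with $d(x_{n_k}, \cA_\mu) \ge \ee$; by (A1) the enclosing set $\Omega$ is compact, so a sub-subsequence converges to some $x^\ast \in \Omega$. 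For each fixed $m$, a tail of $\{x_{n_k}\}$ lies in the closed set $g_m(\mu)$, hence so does $x^\ast$; therefore $x^\ast$ belongs to $\bigcap_m g_m(\mu) = \cA_\mu$, contradicting $d(x^\ast, \cA_\mu) \ge \ee$.

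With continuity of each $g_n$ and pointwise convergence $g_n \to g$, where $g(\mu) = \cA_\mu$, in hand, Lemma~\ref{lemma:resid} delivers continuity of $\mu \mapsto \cA_\mu$ in the Hausdorff metric on a residual subset of $M$. I do not anticipate any real obstacle: the only nontrivial piece is the compactness-based argument that a nested decreasing sequence of compact sets converges in Hausdorff distance to its intersection, and this is standard once (A1) has been invoked to confine all the $g_n(\mu)$ to the common compact set $\Omega$.
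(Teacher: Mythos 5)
Your proposal is correct and takes essentially the same route as the paper: apply Lemma~\ref{lemma:resid} to $g_n(\mu) = f_\mu^n(N_\mu)$, with continuity of each $g_n$ supplied by Lemma~\ref{lemma:close}. The only difference is that you explicitly justify the pointwise Hausdorff convergence $g_n(\mu) \to \cA_\mu$ via the standard nested-compact-sets argument, a step the paper simply attributes to (A2); this is a filling-in of detail rather than a different approach.
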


\begin{proof}
Let $g_n(\mu) = f_\mu^n(N_\mu)$ and $g(\mu) = \cA_\mu$.
By Lemma \ref{lemma:close}, each $g_n(\mu)$ is continuous,
and by (A2), $g_n(\mu) \to g(\mu)$ as $n \to \infty$ for each $\mu \in M$,
so the result follows by Lemma \ref{lemma:resid}.
\end{proof}

\section{Coupled skew tent maps}
\label{sec:coupskew}

In the next three sections we identify continuous chaotic attractors
in three different piecewise-linear maps.
In these sections $d$ is the Euclidean metric on $\mathbb{R}^2$.

\begin{figure}[b!]
\begin{center}
\setlength{\unitlength}{1cm}
\begin{picture}(4.2,4.2)
\put(0,0){\includegraphics[height=4.2cm]{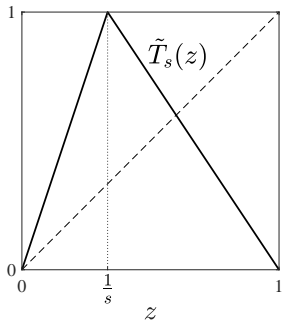}}
\end{picture}
\caption{
The skew tent map \eqref{eq:skewtent}.
\label{fig:schemSkewTentMap}
}
\end{center}
\end{figure}

Skew tent maps generalise \eqref{eq:tent} to allow two slopes that differ in absolute value.
Specifically we consider
\begin{equation}
\label{eq:skewtent}
\tilde{T}_s(z) =
\begin{cases} s z, & 0 \le z \le \frac{1}{s}, \\
\frac{s}{s-1}(1-z), & \frac{1}{s} \le z \le 1,
\end{cases}
\end{equation}
where $1 < s < 2$.
Each $\tilde{T}_s$, see Fig.~\ref{fig:schemSkewTentMap},
is a skew tent map on $[0,1]$ equivalent to a full shift on two symbols.
As considered originally in \cite{PiGr91},
here we use \eqref{eq:skewtent} to form the coupled skew tent map
\begin{equation}
\label{eq:coupledst}
f_{s,\omega}(x) = \begin{bmatrix}
(1-\omega) \tilde{T}_s(x_1) + \omega \tilde{T}_s(x_2) \\
\omega \tilde{T}_s(x_1) + (1-\omega) \tilde{T}_s(x_2)
\end{bmatrix},
\end{equation}
where $0 \le \omega \le \textstyle{\frac{1}{2}}$ is a measure of the coupling strength.
This is a map on $[0,1] \times [0,1]$ and we write $x = (x_1,x_2) \in \mathbb{R}^2$.

The diagonal $x_1 = x_2$ is an invariant set that is stable for sufficiently large values of $\omega$.
As the value of $\omega$ is decreased a `blowout bifurcation' occurs when typical transverse Lyapunov exponents become positive at
$\omega = \frac{1}{2} \left( 1 - {\rm e}^{-\gamma} \right)$,
where $\gamma = {\rm ln}(s) - \left( 1 - \frac{1}{s} \right) {\rm ln}(1-s)$, see \cite{Gl01}.
However, some orbits on the diagonal become transversely unstable
before the blowout bifurcation.
This first occurs at $\omega = \frac{1}{2 s}$ and is responsible for the creation of a two-dimensional attractor.

\begin{figure}[b!]
\begin{center}
\setlength{\unitlength}{1cm}
\begin{picture}(6,6)
\put(0,0){\includegraphics[height=6cm]{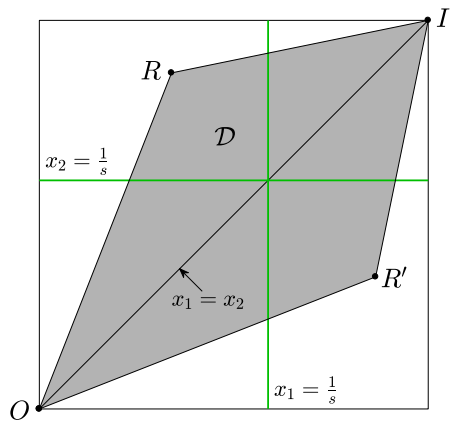}}
\end{picture}
\caption{
Phase space $(x_1,x_2)$ of the coupled skew tent map \eqref{eq:coupledst}
showing the quadrilateral $\mathcal{D}$ of Theorem \ref{th:Gl01}.
\label{fig:schemCoupledSkewQuad}
}
\end{center}
\end{figure}

\begin{theorem}[Glendinning \cite{Gl01}]
\label{th:Gl01}
Let $\frac{\sqrt{5} + 1}{2} < s < 2$.
Let $\mathcal{D}$ be the closed quadrilateral $ORIR'$ where
\begin{align*}
O &= (0,0), &
R &= \left( 2 \omega, \tfrac{1-2\omega+2\omega^2}{1-\omega} \right), &
I &= (1,1), &
R^\prime &= \left( \tfrac{1-2\omega+2\omega^2}{1-\omega}, 2 \omega \right),
\end{align*}
see Fig.~\ref{fig:schemCoupledSkewQuad}.
If $0 < \omega < \frac{1}{2 s}$
then $\mathcal{D}$ is the unique attractor of \eqref{eq:coupledst},
whilst if $\frac{1}{2 s} < \omega < \frac{1}{2}$ then the diagonal $x_1 = x_2$
is the unique attractor of \eqref{eq:coupledst}.
\end{theorem}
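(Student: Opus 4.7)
The plan is to decompose the dynamics into motion along the diagonal $\Delta = \{x_1 = x_2\}$ and motion transverse to it, then treat the two parameter regimes separately. Subtracting the two components of $f_{s,\omega}$ gives
$$
[f_{s,\omega}(x)]_1 - [f_{s,\omega}(x)]_2 = (1-2\omega)\bigl(\tilde T_s(x_1) - \tilde T_s(x_2)\bigr),
$$
so $\Delta$ is forward invariant and $f_{s,\omega}|_\Delta$ is conjugate to $\tilde T_s$. Since $1 < s < 2$ the supremum of $|\tilde T_s'|$ equals $s/(s-1)$, attained on the right branch, so the worst-case transverse multiplier along $\Delta$ is $(1-2\omega)s/(s-1)$, which is strictly less than one precisely when $\omega > 1/(2s)$.

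For $\omega > 1/(2s)$, this uniform transverse contraction shows that a sufficiently thin tubular neighbourhood of $\Delta$ is mapped strictly into itself and shrinks geometrically onto $\Delta$, giving attraction in the sense of Definition~\ref{def:attr}(iii). Applied to the forward-invariant square $[0,1]^2$, the same estimate rules out any attractor lying off $\Delta$, and since $\tilde T_s$ has $[0,1]$ as its unique attractor, the segment $\Delta \cap [0,1]^2$ is the unique attractor of $f_{s,\omega}$.

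For $\omega < 1/(2s)$, I would first verify that $\mathcal{D}$ is forward invariant, indeed that $f_{s,\omega}([0,1]^2) = \mathcal{D}$. Writing
$$
f_{s,\omega}(x) = (1-\omega)\bigl(\tilde T_s(x_1), \tilde T_s(x_2)\bigr) + \omega\bigl(\tilde T_s(x_2), \tilde T_s(x_1)\bigr)
$$
displays each image as a weighted midpoint of a point and its reflection across $\Delta$; the four affine branches of $\tilde T_s \times \tilde T_s$ on $[0,1]^2$ assemble into four affine pieces whose union is a quadrilateral, and a direct computation on the corners of $[0,1]^2$ and on the break lines $x_i = 1/s$ identifies its vertices as $O$, $R$, $I$, $R'$.

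The main obstacle is then showing that $\mathcal{D}$ is topologically transitive and hence carries a dense orbit. Here the transverse multiplier exceeds one near the right fixed point of $\tilde T_s$, so orbits spread off $\Delta$, while the piecewise-affine structure contracts complementary directions on each linear piece. I would construct a Markov partition by iterating the preimages of the break lines $x_1 = 1/s$ and $x_2 = 1/s$ and identify the associated symbolic dynamics with a mixing subshift of finite type, following \cite{Gl01}; this yields both a dense orbit and the equality $f_{s,\omega}(\mathcal{D}) = \mathcal{D}$. Uniqueness is then automatic, since $f_{s,\omega}([0,1]^2) \subseteq \mathcal{D}$ forbids any attractor outside $\mathcal{D}$ and transitivity forces any attractor inside $\mathcal{D}$ to coincide with $\mathcal{D}$ itself.
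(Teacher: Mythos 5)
The paper does not actually prove this result --- it is imported verbatim from Glendinning \cite{Gl01} --- so there is no internal proof to compare against; I can only assess your reconstruction. Your treatment of the regime $\omega > \frac{1}{2s}$ is sound: the difference coordinate satisfies $|u_{n+1}| \le (1-2\omega)\frac{s}{s-1}|u_n|$ because $\frac{s}{s-1}$ is the Lipschitz constant of $\tilde{T}_s$ for $1<s<2$, and $(1-2\omega)\frac{s}{s-1}<1$ exactly when $\omega>\frac{1}{2s}$, which gives uniform collapse onto the diagonal; the restriction to the diagonal is the full skew tent map, whose unique attractor is all of $[0,1]$.

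The regime $\omega < \frac{1}{2s}$ contains a concrete error that sinks the rest. Each branch of $\tilde{T}_s$ maps its domain interval onto all of $[0,1]$, so $(\tilde{T}_s\times\tilde{T}_s)([0,1]^2)=[0,1]^2$ and hence $f_{s,\omega}([0,1]^2)=A([0,1]^2)$ where $A=\begin{pmatrix}1-\omega&\omega\\ \omega&1-\omega\end{pmatrix}$: this is the parallelogram with vertices $(0,0)$, $(1-\omega,\omega)$, $(1,1)$, $(\omega,1-\omega)$, not the quadrilateral $ORIR'$. Indeed $(1-\omega,\omega)=f_{s,\omega}\left(\tfrac{1}{s},1\right)$ lies strictly outside $\mathcal{D}$: comparing the slope of $OR'$, namely $\tfrac{2\omega(1-\omega)}{1-2\omega+2\omega^2}$, with the slope $\tfrac{\omega}{1-\omega}$ of the ray to $(1-\omega,\omega)$ reduces to $1>2\omega$, so $(1-\omega,\omega)$ sits on the far side of the line $OR'$ from the diagonal. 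Consequently your claimed identity $f_{s,\omega}([0,1]^2)=\mathcal{D}$ is false, the vertex identification built on it does not go through, and the uniqueness argument resting on $f_{s,\omega}([0,1]^2)\subseteq\mathcal{D}$ collapses. The set $\mathcal{D}$ only emerges after further iteration (its boundary is assembled from images of the critical lines $x_i=\tfrac{1}{s}$ and pieces of invariant manifold, not from one application of the map), and establishing $f_{s,\omega}(\mathcal{D})=\mathcal{D}$ together with transitivity on $\mathcal{D}$ is precisely the hard content of \cite{Gl01}, which your outline defers to rather than supplies. A further warning sign is that the hypothesis $s>\frac{\sqrt{5}+1}{2}$ never enters your argument, although it is essential to that construction.
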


The two types of attractor: $\mathcal{D}$ and the diagonal $x_1 = x_2$,
are clearly chaotic and vary continuously with $s$ and $\omega$.
Consequently we have the following result.

\begin{corollary}
\label{cor:cstm}
Let $\frac{\sqrt{5} + 1}{2} < s < 2$.
Then \eqref{eq:coupledst} has robust chaos for $0 < \omega < \frac{1}{2}$.
The attractor is continuous in the Hausdorff metric for $0 < \omega < \frac{1}{2 s}$
and $\frac{1}{2 s} < \omega < \frac{1}{2}$.
\end{corollary}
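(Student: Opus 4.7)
The plan is to read the attractors off Theorem~\ref{th:Gl01} in the two regimes and verify continuity by direct inspection, rather than invoking the general machinery of \S\ref{sec:UCC}. Both the quadrilateral and the diagonal admit explicit closed-form descriptions, so the work reduces to tracking how these descriptions vary with~$\omega$.

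First I would dispose of the robust chaos claim. Theorem~\ref{th:Gl01} supplies a chaotic attractor at every $\omega \in (0,\tfrac12) \setminus \{\tfrac{1}{2s}\}$, and chaoticity at the transition value $\omega = \tfrac{1}{2s}$ is inherited from the skew tent dynamics on the diagonal, which for $s$ in the stated range has a positive Lyapunov exponent. Non-conjugacy of the two regimes is immediate: the attractor is two-dimensional for $\omega < \tfrac{1}{2s}$ and one-dimensional for $\omega > \tfrac{1}{2s}$, and topological conjugacy preserves topological dimension.

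For continuity on $(0, \tfrac{1}{2s})$, I would observe that the four vertices $O, R, I, R'$ listed in Theorem~\ref{th:Gl01} are rational functions of $\omega$ with no poles in this interval, hence continuous. Since the attractor $\mathcal{D}$ is the (convex) quadrilateral $ORIR'$, it can equally be described as the convex hull of these four vertices, and convex hulls of finite point sets depend continuously on the points in the Hausdorff metric: if each $v_i$ is replaced by $v_i'$, then any $p = \sum \lambda_i v_i$ in the first hull is matched by $p' = \sum \lambda_i v_i'$ in the second with $\|p - p'\| \le \max_i \|v_i - v_i'\|$, and the symmetric bound holds with the roles swapped. Continuity of $\cA_\omega$ on this sub-interval follows.

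For continuity on $(\tfrac{1}{2s}, \tfrac12)$ the attractor is the fixed segment $\{(x,x) : x \in [0,1]\}$ (the restriction of $f_{s,\omega}$ to the diagonal is $\tilde{T}_s$, whose attractor is the full interval $[0,1]$); this is independent of $\omega$, so continuity is trivial. The only genuine subtlety, and the reason the two open sub-intervals must be treated separately, is that at $\omega = \tfrac{1}{2s}$ the attractor jumps in topological dimension from two to one and continuity in the Hausdorff metric necessarily fails there.
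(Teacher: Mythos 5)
Your proposal is correct and follows essentially the same route as the paper, which simply reads the two attractors off Theorem~\ref{th:Gl01} and observes that the quadrilateral $\mathcal{D}$ (whose vertices depend continuously on $\omega$) and the fixed diagonal each vary continuously, with non-conjugacy across $\omega = \frac{1}{2s}$ forced by the change in the attractor. You merely make explicit the details the paper leaves as ``clearly'' --- the convex-hull continuity estimate and the dimension argument --- which are both sound.
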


The region of robust chaos, Fig.~\ref{fig:bifSetCoupledSkew},
is thus divided into two pieces by the curve $\omega = \frac{1}{2 s}$
through which the attractor cannot be continued.
In this way our consideration of continuity in the Hausdorff metric
has allowed us to partition the region of robust chaos into two different types in a formal way.

It could be objected that this example is a boundary case
as $\mathcal{D}$ does not satisfy part (iii) of Definition \ref{def:attr}.
In this sense the map has the same status as $x \mapsto 4 x (1-x)$
for which the interval $[0,1]$ is the `attractor' although all points outside this interval diverge.
This is a technical nicety that we expect can be circumvented by generalising the skew tent map \eqref{eq:skewtent} to
\begin{equation}
\label{eq:artent}
\tilde{T}_{s,t}(z) =
\begin{cases}
s z, & 0 \le z \le t, \\ 
\frac{s t}{1-t}(1-z), & t \le z \le 1,
\end{cases}
\end{equation}
where $0 < t < \frac{1}{s}$.
Numerical experiments suggest that the two-dimensional map obtained by replacing $\tilde{T}_s$
with $\tilde{T}_{s,t}$ in \eqref{eq:coupledst} exhibits an analogous continuous quadrilateral attractor
that now satisfies part (iii) of Definition \ref{def:attr} for some $r > 0$,
but it remains to carefully extend the construction of $\mathcal{D}$ given in \cite{Gl01} to allow $t < \frac{1}{s}$.

\begin{figure}[h!]
\begin{center}
\setlength{\unitlength}{1cm}
\begin{picture}(5.6,4.2)
\put(0,0){\includegraphics[height=4.2cm]{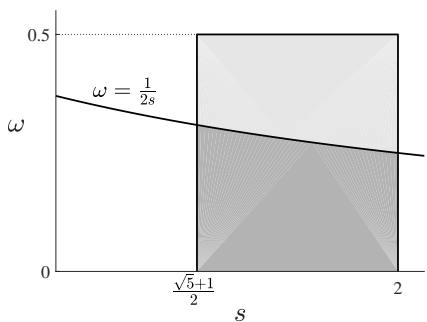}}
\end{picture}
\caption{
A parameter region of the coupled skew tent map \eqref{eq:coupledst} corresponding to robust chaos.
Above $\omega = \frac{1}{2 s}$ the attractor is the diagonal $x_1 = x_2$;
below this curve the attractor is $\mathcal{D}$,
see Fig.~\ref{fig:schemCoupledSkewQuad} and Corollary \ref{cor:cstm}.	
\label{fig:bifSetCoupledSkew}
}
\end{center}
\end{figure}

\section{Lozi Maps}
\label{sec:Lozi}

The Lozi map \cite{Lo78}
\begin{equation}
\label{eq:Lozi}
L(x) = \begin{bmatrix}
1 - a|x_1| + x_2 \\
b x_1
\end{bmatrix},
\end{equation}
where $a, b \in \mathbb{R}$ are parameters,
is a piecewise-linear version of the H\'{e}non map.
Misiurewicz established robust chaos for \eqref{eq:Lozi} in \cite{Mi80}.

\begin{theorem}[Misiurewicz \cite{Mi80}]
\label{th:Mi80}
Suppose
\begin{align}
\label{eq:lpars}
0 &< b < 1, &
0 &< a < \tfrac{4-b}{2}, &
a &> \tfrac{b + 2}{\sqrt{2}}, &
b &< \tfrac{a^2 - 1}{2 a + 1}.
\end{align}
Then the Lozi map \eqref{eq:Lozi} has a unique saddle-type fixed point in $x_1 > 0$ (denoted $X$)
and the closure of the unstable manifold of this point is a chaotic attractor $\cA$.
\end{theorem}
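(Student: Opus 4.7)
The plan is to follow Misiurewicz's original strategy, which splits into (a) locating and classifying the fixed point, (b) building an explicit forward-invariant trapping region $\mathcal{T}$, (c) showing that $W^u(X)$ lies in $\mathcal{T}$ and coincides with its own closure's attracting set, and (d) verifying hyperbolicity and transitivity to conclude chaos.

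First, on the half-plane $x_1 > 0$ the map $L$ is affine with Jacobian
\[
J_+ = \begin{pmatrix} -a & 1 \\ b & 0 \end{pmatrix},
\]
so the fixed-point equation is linear and yields the unique solution $X = \bigl(\tfrac{1}{1+a-b}, \tfrac{b}{1+a-b}\bigr)$, which lies in $x_1>0$ because $0<b<1$ and $a>\tfrac{b+2}{\sqrt{2}}>0$. The characteristic polynomial $\lambda^2 + a\lambda - b = 0$ has roots $\lambda_\pm = \tfrac{-a \pm \sqrt{a^2+4b}}{2}$; the inequalities in \eqref{eq:lpars} are strong enough to force $|\lambda_-|>1$ and $|\lambda_+|<1$, giving the saddle. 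The unstable direction $E^u$ is the eigenline of $\lambda_-$, with negative slope.

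Next, I would construct the trapping region. The idea is to take a quadrilateral $\mathcal{T}$ bounded by two segments of $W^u(X)$ (running from $X$ through the fold line $x_1 = 0$) together with two segments of their forward images under the two affine branches of $L$. Denote by $X' = L^{-1}(X) \cap \{x_1<0\}$ and similarly track a few forward and backward iterates of $X$; the vertices of $\mathcal{T}$ can be written as explicit rational functions of $a$ and $b$. The key verification is $L(\mathcal{T}) \subseteq \mathcal{T}$: because each of the two branches is affine, this reduces to checking that the images of the four vertices and of the intersection of $\mathcal{T}$ with $\{x_1=0\}$ all lie in $\mathcal{T}$. The inequalities \eqref{eq:lpars} translate exactly into these vertex containments; this correspondence is the main bookkeeping burden of the proof.

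Third, to identify $\cA = \overline{W^u(X)}$ as an attractor, note that $W^u(X)$ is contained in $\mathcal{T}$ (since segments of $W^u$ starting near $X$ extend under $L$, and $L(\mathcal{T})\subseteq \mathcal{T}$), hence $\overline{W^u(X)}\subseteq \mathcal{T}$. One introduces a cone field around $E^u$ that is preserved and uniformly expanded by $DL$ on both branches; because $\mathcal{T}$ meets the fold line $\{x_1=0\}$ transversally to the unstable cone, iterates of any small curve tangent to the unstable cone stretch, fold, and accumulate on $\overline{W^u(X)}$ in the Hausdorff metric. A standard argument then shows $\cap_{n\ge 0} L^n(\mathcal{T}) = \overline{W^u(X)}$, yielding $L(\cA) = \cA$, the dense-orbit condition via homoclinic intersections of $W^u(X)$ and $W^s(X)$, and attraction of a neighbourhood $B_r(\cA)$. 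Positivity of the Lyapunov exponent is automatic from the uniform expansion $|\lambda_-|>1$ along the invariant cone.

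The main obstacle is the explicit construction in step two: producing a quadrilateral that is simultaneously forward invariant, transverse to the fold line in the right sense, and large enough to contain $W^u(X)$. Getting the geometry to work requires delicate estimates, and it is precisely in these estimates that the four inequalities in \eqref{eq:lpars} appear as sharp algebraic conditions rather than loose sufficient ones.
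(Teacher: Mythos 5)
This theorem is quoted from Misiurewicz \cite{Mi80}; the paper supplies no proof of it, so there is nothing internal to compare against beyond the sketch of Misiurewicz's construction given in \S\ref{sec:Lozi} (the triangle $XZP$, the forward-invariant set $H=\cup_k L^k(H_0)$, and $\cA=\cap_k L^k(H)$). Your outline follows the same general strategy as the original argument: an explicit forward-invariant polygon bounded by pieces of the invariant manifolds, hyperbolicity from eigenvalue/cone estimates, and identification of the attractor with $\overline{W^u(X)}$. The fixed-point location and the saddle classification are correct and easy (indeed $|\lambda_-|=\tfrac{a+\sqrt{a^2+4b}}{2}>a>\sqrt{2}$ and $|\lambda_+\lambda_-|=b<1$ settle it immediately), and the negative slope of $E^u$ is right.

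As a proof, however, the proposal has genuine gaps. First, everything that makes the theorem nontrivial is deferred: the forward-invariance of $\mathcal{T}$ is said to ``reduce to vertex containments'' that are never carried out, and it is precisely there that the four inequalities of \eqref{eq:lpars} enter (Misiurewicz's invariant region is in fact the triangle with vertices $Z$, $L(Z)$, $L^2(Z)$ on $W^u(X)$, not a quadrilateral; and his conditions are sufficient rather than sharp --- the attractor persists outside them). Second, and more seriously, the dense-orbit condition does not follow from ``homoclinic intersections of $W^u(X)$ and $W^s(X)$'': a transverse homoclinic point gives a horseshoe with a dense orbit in that horseshoe, which is a proper closed invariant subset of $\overline{W^u(X)}$, not the whole attractor. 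Misiurewicz's actual argument establishes topological mixing on $\overline{W^u(X)}$ via a growth lemma: every sufficiently small segment of unstable manifold, although repeatedly cut by the switching line $x_1=0$, contains a piece whose iterates expand until they cover a fixed core segment of $W^u(X)$ whose forward orbit is dense in the attractor. That lemma is the heart of the proof, it is also what justifies the identity $\cap_{n\ge 0}L^n(\mathcal{T})=\overline{W^u(X)}$ that you label ``standard,'' and it is absent from your sketch. In short: a reasonable roadmap, consistent with the cited source, but not yet a proof.
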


Here we adapt Misiurewicz's construction to show that the chaotic attractor he obtains
varies continuously with $a$ and $b$.
Our proof uses the results of \S\ref{sec:UCC}
and explains why it was necessary to add the variation
of the fundamental converging sets $N_\mu$ in that section.


\begin{theorem}
Throughout the parameter region \eqref{eq:lpars}
the attractor $\cA$ of Theorem \ref{th:Mi80} is continuous in the Hausdorff metric.
\end{theorem}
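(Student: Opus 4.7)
The plan is to apply Theorem \ref{th:cts} after localising to compact subsets of the open parameter region \eqref{eq:lpars}. Because continuity in the Hausdorff metric is a pointwise (in $\mu$) property, it suffices to fix an arbitrary closed box $M$ contained in \eqref{eq:lpars} and to verify (A1), (A2), and uniform convergence of $L_\mu^n(N_\mu)$ to $\cA_\mu$ for the family restricted to $M$; continuity at every $\mu \in \eqref{eq:lpars}$ then follows by choosing $M$ to be a small closed neighbourhood of $\mu$.

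First I would produce the family $N_\mu$ required by (A2). Misiurewicz's proof of Theorem \ref{th:Mi80} furnishes an explicit trapping polygon whose vertices are images, under a few iterates of $L$, of points lying on the local unstable manifold of the saddle $X$. The fixed point $X$, its unstable eigendirection, and the map $L_\mu$ itself all depend continuously on $\mu = (a,b)$ throughout \eqref{eq:lpars}, so the vertices, and hence the polygon $N_\mu$, vary continuously in the Hausdorff metric; moreover $L_\mu(N_\mu) \subseteq N_\mu$ and $\cA_\mu = \mathrm{cl}\left( \bigcap_{n \ge 0} L_\mu^n(N_\mu) \right)$, which is precisely how Misiurewicz identifies the attractor. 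A compact $\Omega$ fulfilling (A1) can then be taken as any closed ball containing $\bigcup_{\mu \in M} N_\mu$, which is bounded by continuity of $N_\mu$ together with compactness of $M$.

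The substantive task is to upgrade the pointwise convergence $d_a\left(L_\mu^n(N_\mu), \cA_\mu\right) \to 0$ to convergence uniform in $\mu \in M$. Misiurewicz constructs an invariant cone field for which $DL_\mu$ contracts distances transverse to the unstable direction by a factor bounded by some $\lambda_\mu < 1$ whose value is determined by the inequalities \eqref{eq:lpars}. Since those inequalities hold with uniform slack on the compact set $M$, one can extract a single $\lambda < 1$ and constant $C$, depending only on $M$, for which the transverse width of $L_\mu^n(N_\mu)$ is at most $C \lambda^n$ for all $\mu \in M$ and all $n \ge 0$. As $\cA_\mu$ is dense in the unstable direction within $L_\mu^n(N_\mu)$, this transverse estimate bounds $d_H\left(L_\mu^n(N_\mu), \cA_\mu\right)$ and yields the required uniform convergence; Theorem \ref{th:cts} then delivers continuity on $M$ and hence throughout \eqref{eq:lpars}.

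The hard step will be extracting parameter-uniform hyperbolic rates from Misiurewicz's qualitative cone-field argument. In principle this amounts only to carrying the constants in his construction through with explicit $\mu$-dependence and invoking compactness of $M$, but it requires a careful reworking of his estimates; once this is done the application of \S\ref{sec:UCC} is routine.
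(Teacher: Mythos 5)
Your overall strategy --- localise to a compact parameter box $M$, take $N_\mu$ to be Misiurewicz's trapping region, verify (A1) and (A2) via continuity of the construction in $(a,b)$, and feed uniform convergence of $L_\mu^n(N_\mu)$ to $\cA_\mu$ into Theorem~\ref{th:cts} --- is exactly the paper's. The problem is the one step you defer: extracting a parameter-uniform transverse contraction rate from a cone-field argument. You correctly flag this as ``the hard step'' but do not carry it out, so as written the proof has a hole precisely where the work lies. The paper avoids cone fields altogether with a much cheaper observation: $L$ is invertible with Jacobian determinant of absolute value $b$ away from $x_1=0$, so ${\rm Area}\left(L^n(H)\right) = b^n\,{\rm Area}(H) \le b^n K$, where $K$ is the area of the forward-invariant triangle $Z\,L(Z)\,L^2(Z)$. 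Comparing with a disc, every point of $L^n(H)$ is within $\sqrt{K/\pi}\,b^{n/2}$ of the boundary of $L^n(H)$. Since $b$, $K$, $\lambda_s$, and $d(X,P)$ vary continuously and hence are uniformly bounded on $M$, uniformity in $\mu$ is immediate; no quantitative hyperbolicity estimates are needed.

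There is a second, more substantive gap in your reduction of $d_H\left(L_\mu^n(N_\mu),\cA_\mu\right)$ to a transverse width: the claim that ``$\cA_\mu$ is dense in the unstable direction within $L_\mu^n(N_\mu)$'' overlooks the fact that the boundary of the trapping region is not entirely contained in the unstable manifold of $X$. In Misiurewicz's construction the boundary of $H = \cup_{k\ge 0} L^k(H_0)$ (with $H_0$ the filled triangle $XZP$) consists of arcs of the unstable manifold together with the segment $XP$ of the local stable manifold, and the boundary of $L^n(H)$ correspondingly contains the segment $X\,L^n(P)$, which is not in $\cA$. Being close to the boundary therefore does not immediately mean being close to the attractor; one must control this stable piece separately. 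The paper does so by noting $d\left(X, L^n(P)\right) = \lambda_s^n\, d(X,P)$ with $0<\lambda_s<1$ the stable eigenvalue, yielding $d_a\left(L^n(H),\cA\right) \le \sqrt{K/\pi}\, b^{n/2} + \lambda_s^n d(X,P)$. Incorporating the determinant argument and this stable-segment term would close both gaps, and the localisation over a compact neighbourhood that you describe then goes through exactly as you intend.
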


\begin{figure}[b!]
\begin{center}
\setlength{\unitlength}{1cm}
\begin{picture}(8,6)
\put(0,0){\includegraphics[height=6cm]{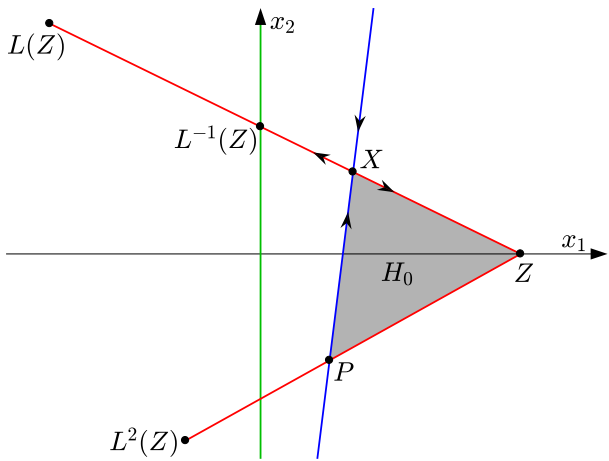}}
\end{picture}
\caption{
Parts of the stable (blue) and unstable (red) manifolds of the saddle-type fixed point $X$
of the Lozi map \eqref{eq:Lozi}.
Note, the stable eigenvalue associated with $X$ is positive
(so the stable manifold has two dynamically independent branches),
while the unstable eigenvalue is negative
(so the unstable manifold has one dynamically independent branch).
\label{fig:schemPhaseSpaceLozi}
}
\end{center}
\end{figure}

\begin{proof}
Following \cite{Mi80},
let $X$ be the fixed point in $x_1 > 0$,
let $Z$ be the intersection of the local unstable manifold of $X$ with the $x_1$-axis,
and let $P$ be the intersection of the local stable manifold of $X$ with the line segment $Z L^2(Z)$,
see Fig.~\ref{fig:schemPhaseSpaceLozi}.
Let $H_0$ be the compact filled triangle $XZP$.
The conditions \eqref{eq:lpars} imply that
$H_0$ is contained in the region $x_1 > 0$,
the line segments $X Z$ and $Z P$ belong to the unstable manifold of $X$,
and $X P$ belongs to the local stable manifold of $X$.
It follows that every point on the boundary of the forward invariant set $H = \cup_{k=0}^\infty L^k(H_0)$
belongs to either the unstable manifold of $X$ or the line segment $X P$.
Moreover every point on the boundary of $L^n(H)$
belongs to either the unstable manifold of $X$ or the line segment $X L^n(P)$.
Notice $d \left( X, L^n(P) \right) = \lambda_s^n d(X,P)$,
where $0 < \lambda_s < 1$ is the stable eigenvalue associated with $X$.

Misiurewicz \cite{Mi80} shows that $\cap_{k=0}^\infty L^k(H)$ is the attractor $\cA$ of Theorem \ref{th:Mi80}.
By Theorem~\ref{th:cts} it remains to show that $d_H \left( L^n(H), \cA \right) \to 0$ as $n \to \infty$
uniformly in $a$ and $b$.

The compact filled triangle $Z L(Z) L^2(Z)$ is forward invariant, see \cite{Mi80},
so if $K$ denotes the area of this triangle then ${\rm Area}(H) \le K$.
For each $n$, ${\rm Area} \left( L^n(H) \right) = b^n {\rm Area}(H)$
(because $L$ is invertible and the absolute value of the
determinant of the Jacobian matrix of $L$ is $b$ at all points with $x_1 \ne 0$).
Thus the distance of any $x \in L^n(H)$ to the boundary of $L^n(H)$
is at most $\sqrt{\frac{K}{\pi}} \,b^{\frac{n}{2}}$
(obtained by imagining $L^n(H)$ as a circle with centre $x$ and using the Euclidean metric).
Thus for any $x \in L^n(H)$,
$$
d_a \left( L^n(H), \cA \right) \le {\textstyle \sqrt{\frac{K}{\pi}}} \,b^{\frac{n}{2}} + \lambda_s^n d(X,P).
$$
and since $\cA \subseteq L^n(H)$ the same bound applies to $d_H \left( L^n(H), \cA \right)$.

Now fix any pair of parameters $(a_0,b_0) \in \mathbb{R}^2$ satisfying \eqref{eq:lpars}.
There exists $\delta > 0$ such that \eqref{eq:lpars} is satisfied by
all $(a,b) \in \mathbb{R}^2$ a distance at most $\delta$ from $(a_0,b_0)$,
call this parameter set $M$.
Denote the supremum values of $K$, $b$, $\lambda_s$, and $d(X,P)$ over $M$
by $K_{\rm max}$, $b_{\rm max}$, $\lambda_{s,{\rm max}}$, and $d_{\rm max}$, respectively.
Then for any $(a,b) \in M$ we have
$$
d_H \left( L^n(H), \cA \right) \le {\textstyle \sqrt{\frac{K_{\rm max}}{\pi}}} \,b_{\rm max}^{\frac{n}{2}} + \lambda_{s,{\rm max}}^n d_{\rm max}.
$$
Since $b_{\rm max}, \lambda_{s,{\rm max}} < 1$
we conclude that $d_H \left( L^n(H), \cA \right) \to 0$ as $n \to \infty$ uniformly in $M_\delta$.
Thus $\cA$ is continuous at $(a_0,b_0)$ by Theorem~\ref{th:cts}.
\end{proof}

\section{Border-collision normal form}
\label{sec:bcnf}

In this section we describe a numerical example of bifurcations of continuous chaotic attractors in
the two-dimensional border-collision normal form
\begin{equation}
\label{eq:bcnf}
x \mapsto
\begin{cases}
\begin{bmatrix} \tau_L & 1 \\ -\delta_L & 0 \end{bmatrix} x +
\begin{bmatrix} 1 \\ 0 \end{bmatrix}, & x_1 \le 0, \\
\begin{bmatrix} \tau_R & 1 \\ -\delta_R & 0 \end{bmatrix} x +
\begin{bmatrix} 1 \\ 0 \end{bmatrix}, & x_1 \ge 0,
\end{cases}
\end{equation}
which has parameters $\tau_L, \delta_L, \tau_R, \delta_R \in \mathbb{R}$.
This map, introduced in \cite{NuYo92},
is a generalisation of the Lozi map and can be used to approximate the dynamics
near any generic border-collision bifurcation in two dimensions \cite{Si16}.

\begin{figure}[b!]
\begin{center}
\setlength{\unitlength}{1cm}
\begin{picture}(12,6)
\put(0,0){\includegraphics[height=6cm]{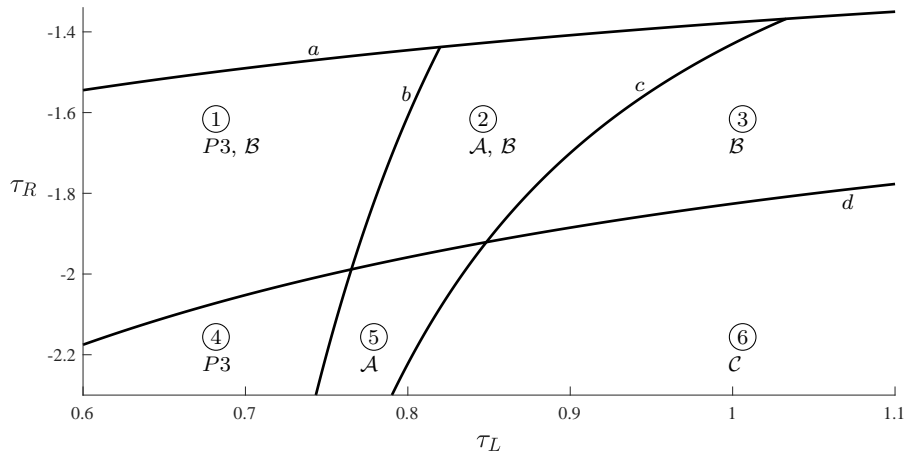}}
\put(4,5.34){\scriptsize $a$}
\put(5.25,4.72){\scriptsize $b$}
\put(8.35,4.86){\scriptsize $c$}
\put(11.1,3.295){\scriptsize $d$}
\put(2.6,4.4){\scriptsize $\filledCircled{1}$}
\put(2.6,4.03){\scriptsize $P3,\,\mathcal{B}$}
\put(6.15,4.4){\scriptsize $\filledCircled{2}$}
\put(6.15,4.03){\scriptsize $\mathcal{A},\,\mathcal{B}$}
\put(9.6,4.4){\scriptsize $\filledCircled{3}$}
\put(9.6,4.03){\scriptsize $\mathcal{B}$}
\put(2.6,1.5){\scriptsize $\filledCircled{4}$}
\put(2.6,1.13){\scriptsize $P3$}
\put(4.7,1.5){\scriptsize $\filledCircled{5}$}
\put(4.7,1.13){\scriptsize $\mathcal{A}$}
\put(9.6,1.5){\scriptsize $\filledCircled{6}$}
\put(9.6,1.13){\scriptsize $\mathcal{C}$}
\end{picture}
\caption{
A two-dimensional slice of the parameter space of the two-dimensional border-collision normal form \eqref{eq:bcnf}
defined by the restriction $\delta_L = \delta_R = 0.3$, \eqref{eq:AvSc12_deltaLdeltaR}.  The stable period-three orbit ($LRL$-cycle) is labelled $P3$; the chaotic attractors are labelled $\mathcal{A}$, $\mathcal{B}$ and $\mathcal{C}$. The bifurcation curves are $a$: border collision; $b$: non-smooth period-doubling; $c$: boundary crisis of attractor $\mathcal{A}$; and $d$: boundary crisis of attractor $\mathcal{B}$.
\label{fig:coexistChaosAvSc12_BifSet}
}
\end{center}
\end{figure}

We develop an example of \cite{AvSc12} and fix
\begin{align}
\delta_L &= 0.3, &
\delta_R &= 0.3.
\label{eq:AvSc12_deltaLdeltaR}
\end{align}
In the $(\tau_L,\tau_R)$-plane, see Fig.~\ref{fig:coexistChaosAvSc12_BifSet},
four codimension-one bifurcation curves, labelled $a$--$d$ and explained below,
divide parameter space into six regions, labelled 1--6.
Fig.~\ref{fig:coexistChaosAvSc12_qqAll} provides one representative phase portrait for each region.
Numerically we observe three continuous chaotic attractors,
a three or six-piece attractor $\cA$ (purple) in regions 2 and 5,
a one-piece attractor $\cB$ (yellow) in regions 1--3,
and a merging of these two attractors $\cC$ (cyan) in region 6.

Let us first describe the four bifurcation curves. Curve $a$ is the locus of a border collision bifurcation. Below curve $a$ there exist unique $LRL$ and $RRL$-cycles (these are period-$3$ solutions with the indicated symbolic itineraries \cite{Si16}).
The $LRL$-cycle is stable in regions 1 and 4. On curve $b$ the $LRL$-cycle has an eigenvalue of $-1$ and there exists a period-$6$ solution with one point on the switching manifold $x_1 = 0$. This solution grows continuously into attractor $\cA$ in regions 2 and 5.
As $\tau_L$ increases, crossing curve $b$, there is a transition from the stable $LRL$-cycle to $\cA$ which is not continuous in the Hausdorff metric because the $LRL$-cycle and period-$6$ solution do not coincide on curve $b$.
For a greater description of this type of non-smooth period-doubling bifurcation refer to \cite{Si10,SuGa10}. Curves $c$ and $d$ are the loci of boundary crisis bifurcations which create and destroy the attractor $\cA$ (curve $c$) and $\cB$ (curve $d$). The intersection of these two curves is a codimension two boundary crisis described by \cite{Os06}, and these curves form the boundary of the region in which attractor $\cC$ exists.

\begin{figure}[b!]
\begin{center}
\setlength{\unitlength}{1cm}
\begin{picture}(16,11)
\put(0,6){\includegraphics[height=5cm]{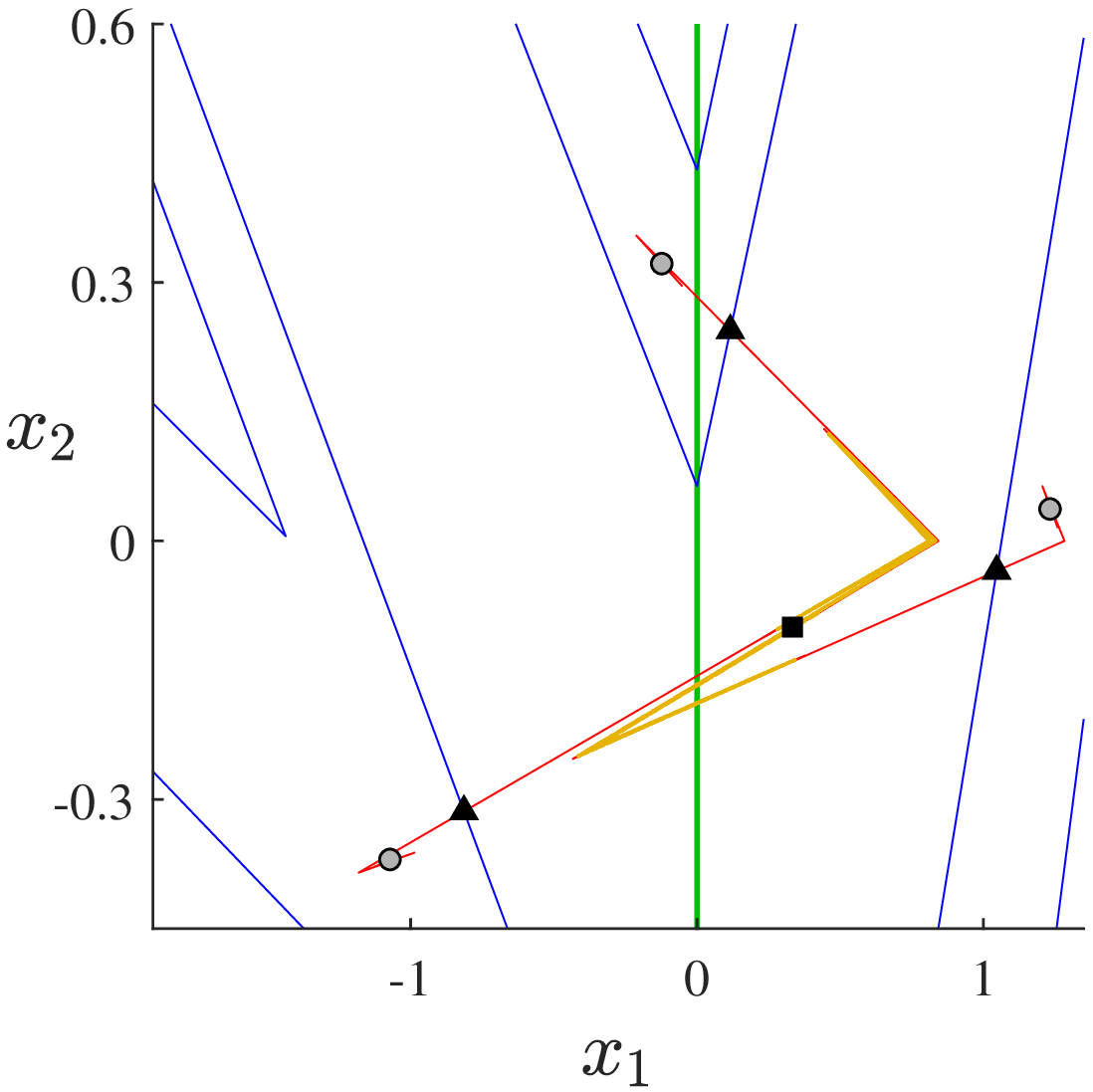}}
\put(5.5,6){\includegraphics[height=5cm]{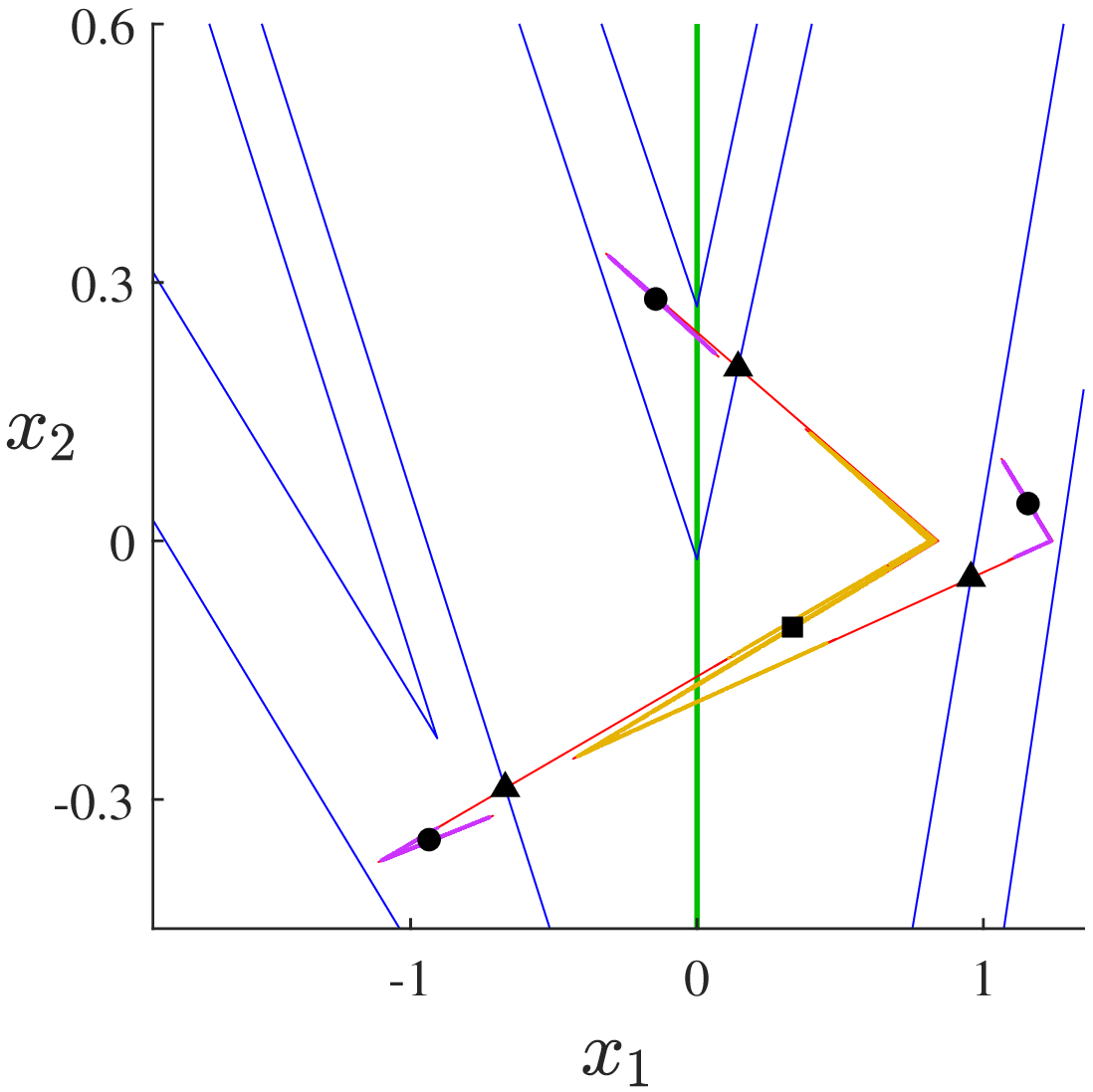}}
\put(11,6){\includegraphics[height=5cm]{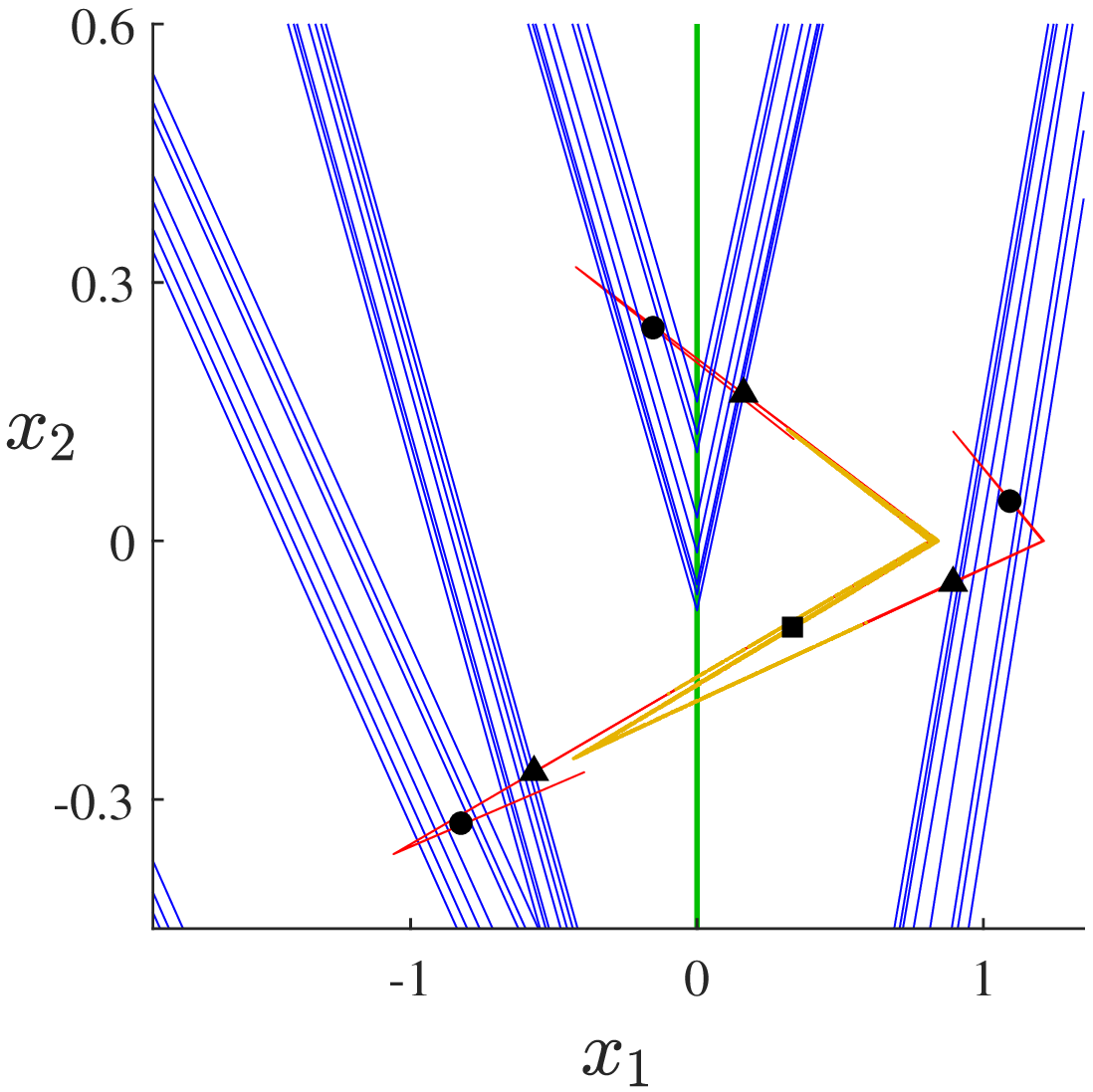}}
\put(0,0){\includegraphics[height=5cm]{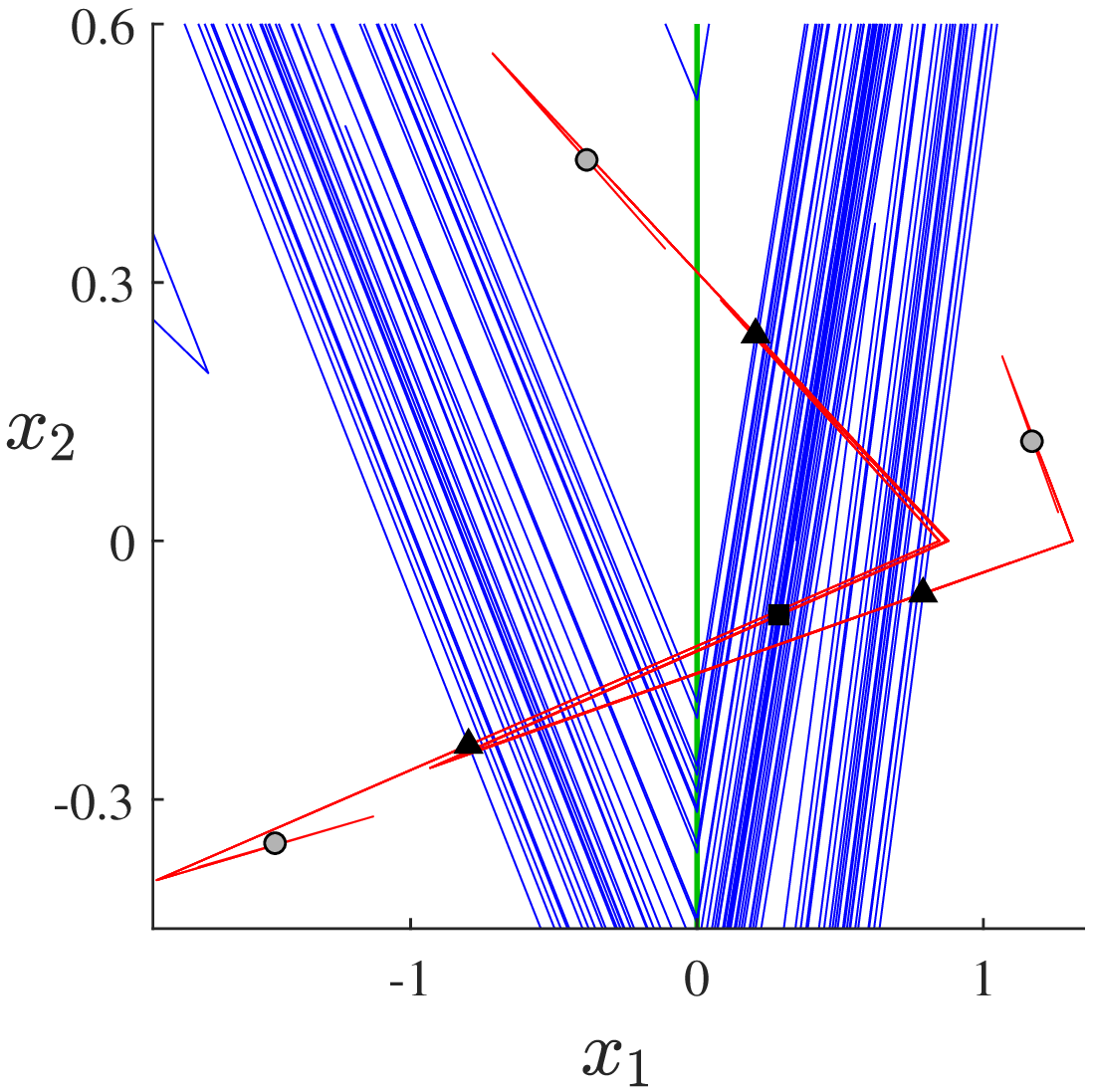}}
\put(5.5,0){\includegraphics[height=5cm]{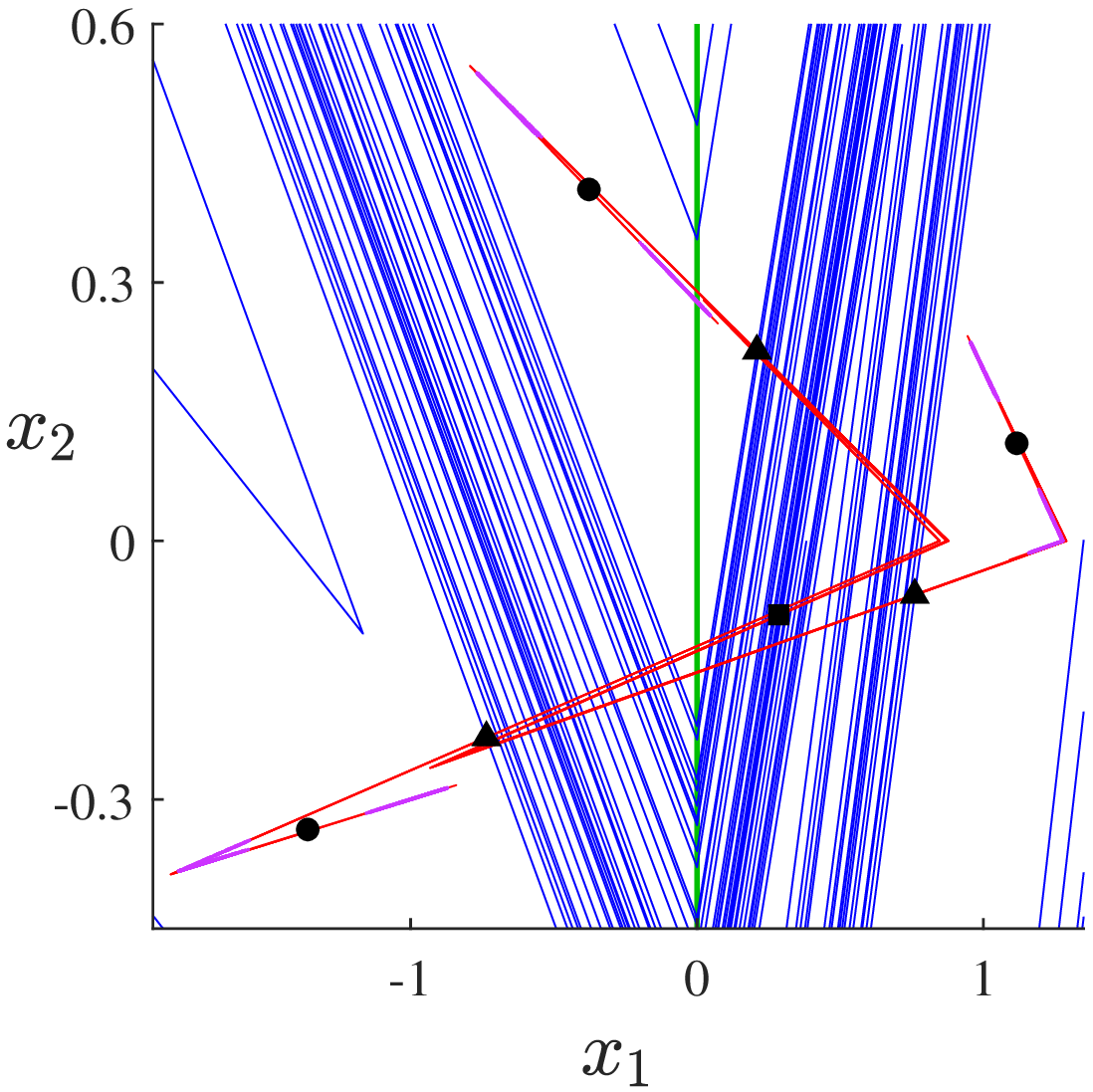}}
\put(11,0){\includegraphics[height=5cm]{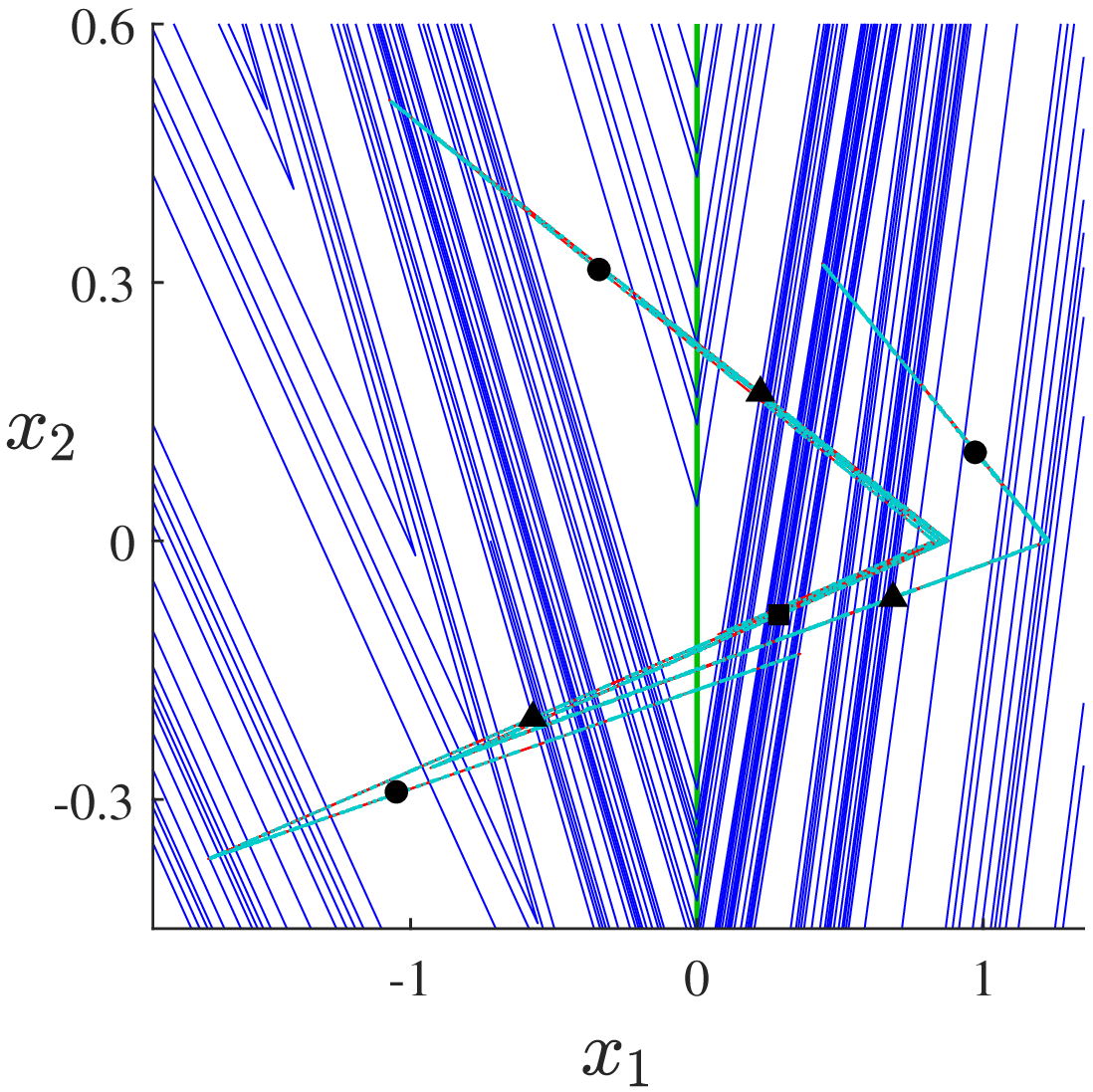}}
\put(.86,10.7){\scriptsize $\filledCircled{1}$}
\put(6.36,10.7){\scriptsize $\filledCircled{2}$}
\put(11.86,10.7){\scriptsize $\filledCircled{3}$}
\put(.86,4.7){\scriptsize $\filledCircled{4}$}
\put(6.36,4.7){\scriptsize $\filledCircled{5}$}
\put(11.86,4.7){\scriptsize $\filledCircled{6}$}
\end{picture}
\caption{
Phase portraits of \eqref{eq:bcnf} with \eqref{eq:AvSc12_deltaLdeltaR}
for sample parameter values in regions 1--6 of Fig.~\ref{fig:coexistChaosAvSc12_BifSet}.
The fixed point in $x_1 > 0$ is shown with a square,
the $LRL$-cycle is shown with circles,
and the $RRL$-cycle is shown with triangles.
The stable and unstable manifolds of the $RRL$-cycle, $W^s$ and $W^u$, are shown blue and red respectively
(these were computed by numerically growing the manifolds outwards
from the $RRL$-cycle for a large number of iterations).
The chaotic attractors $\cA$, $\cB$, and $\cC$ are coloured purple, yellow, and cyan respectively.
\label{fig:coexistChaosAvSc12_qqAll}
}
\end{center}
\end{figure}

In all six regions the $RRL$-cycle is a saddle
and its stable and unstable manifolds, $W^s$ and $W^u$,
are shown in Fig.~\ref{fig:coexistChaosAvSc12_qqAll}.
In regions 1 and 2, $W^s$ forms the boundary between the basins of attraction of the two coexisting attractors.
The unstable eigenvalue associated with the $RRL$-cycle is positive
so $W^u$ has two dynamically independent branches (and each branch has three pieces).

Points on the `outer' branch of $W^u$ converge (under forward iteration of \eqref{eq:bcnf})
to the stable $LRL$-cycle in regions 1 and 4
and to the attractor $\cA$ in regions 2 and 5.
The attractor $\cA$ is destroyed in a {\em crisis} on curve $c$:
here the outer branch of $W^u$ attains an intersection with $W^s$.
This is a first homoclinic tangency \cite{PaTa93} except $W^s$ and $W^u$ are piecewise-linear
so form `corner' intersections \cite{Si16b}.
To the right of curve $c$ points on the outer branch converge to the same attractor as points on the inner branch.

In regions 1--3, points on the `inner' branch of $W^u$ converge to the attractor $\cB$.
This attractor is destroyed in a crisis on curve $d$:
here the inner branch of $W^u$ attains an intersection with $W^s$.
Below curve $d$ points on the inner branch converge to the same attractor as points on the outer branch.

In region 6 points converge to attractor $\cC$ which involves
both parts of phase space associated with $\cA$ and $\cB$.
As we cross curves $c$ or $d$ the transition from $\cA$ or $\cB$ to $\cC$ is not continuous in the Hausdorff metric
because the crises cause orbits to suddenly access new areas of phase space.

As an additional visualisation, Fig.~\ref{fig:coexistChaosAvSc12_Lyap} shows numerically computed
maximal Lyapunov exponents of the attractors.
The observation that the Lyapunov exponents of $\cA$, $\cB$, and $\cC$ are positive
and vary continuously in their respective regions supports our conjecture that
these attractors are chaotic and continuous.
The Lyapunov exponent varies continuously as we cross from region 6 to region 3 through curve $d$
because as we approach curve $d$ the fraction of iterates of attractor $\cC$
that dwell near attractor $\cB$ tends to $1$
(the invariant measure changes continuously across curve $d$),
and similarly from region 6 to region 5 through curve $c$.

\begin{figure}[t!]
\begin{center}
\setlength{\unitlength}{1cm}
\begin{picture}(12,6)
\put(0,0){\includegraphics[height=6cm]{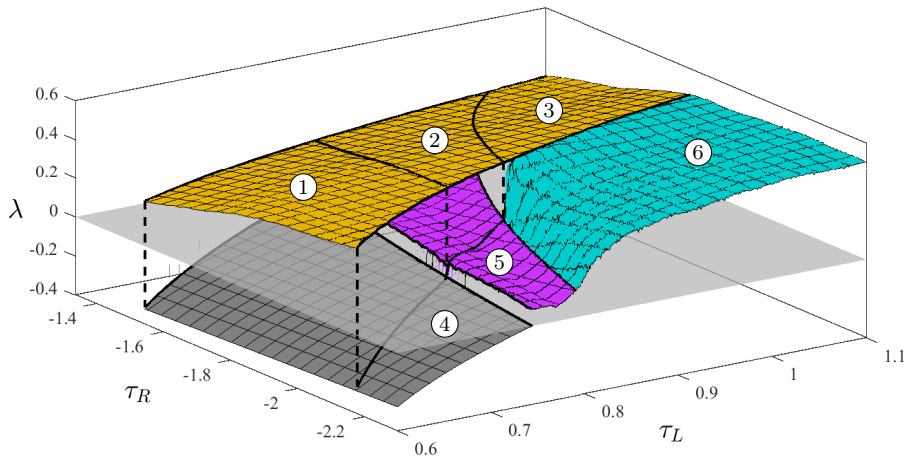}}
\put(3.8,3.5){\scriptsize $\filledCircled{1}$}
\put(5.55,4.12){\scriptsize $\filledCircled{2}$}
\put(7.07,4.52){\scriptsize $\filledCircled{3}$}
\put(5.68,1.68){\scriptsize $\filledCircled{4}$}
\put(6.41,2.5){\scriptsize $\filledCircled{5}$}
\put(9.05,3.96){\scriptsize $\filledCircled{6}$}
\end{picture}
\caption{
Numerically computed maximal Lyapunov exponents for attractors $\cA$ (purple),
$\cB$ (yellow), $\cC$ (cyan), and the $LRL$-cycle (grey).
The bifurcation curves of Fig.~\ref{fig:coexistChaosAvSc12_BifSet} have been overlaid for reference.
Regions 1 and 2 have coexisting attractors so here there are two surfaces of Lyapunov exponents.
\label{fig:coexistChaosAvSc12_Lyap}
}
\end{center}
\end{figure}

In summary, \eqref{eq:bcnf} has robust chaos in all but region 4
and each chaotic attractor appears to be continuous in the Hausdorff metric in the regions in which it exists.
The particular novelty of this example is region 2
where the chaotic attractors $\cA$ and $\cB$ coexist.
One may continue each attractor separately,
$\cA$ may be continued into region 5,
while $\cB$ may be continued into regions 1 and 3.

\section{Discussion}
\label{sec:conc}

In this paper we have added depth to the phenomenon of robust chaos in piecewise-smooth maps.
Previous works have shown piecewise-smooth maps to exhibit robust chaos
in the sense that a chaotic attractor exists throughout an open region of parameter space.
In several examples we have found this attractor to be continuous in the Hausdorff metric
and in this sense exhibits an extra level of robustness.

In the context of numerical exploration
such an attractor could be continued numerically along a one-dimensional path in parameter space.
Given attractors at two different points in parameter space,
one could ask whether or not there exists a path along which
one attractor can be continued into the other.
We stress that the continuation of invariant sets and attracting sets is more commonplace.
Attractors are more restrictive objects needing, among other things, a dense orbit (see Definition \ref{def:attr}),
and so, as argued in \S\ref{sec:quad}, the continuation of a chaotic attractor
may only be useful for piecewise-smooth maps.

Rather than use the Hausdorff metric, one could instead consider the continuity
of an attractor with respect to its Lyapunov spectrum,
the topology of its support (e.g.~number of holes),
its invariant probability measure \cite{AlPu17},
or, in the case of piecewise-smooth maps,
the fraction of iterates that lie on one side of the switching manifold
(which may have a useful physical interpretation).
Indeed, as evident from Fig.~\ref{fig:coexistChaosAvSc12_Lyap},
if one continued attractors using the maximal Lyapunov exponent,
attractors $\cA$ and $\cB$ in region 2 could be connected by a closed path through regions 5, 6, and 3.


\end{document}